\documentclass{article}

\usepackage
{
    graphicx,
    amssymb,
    amsmath,
    amsthm,
    xcolor,
    epstopdf,
    algorithm,
    algpseudocode,
    authblk,
    mathtools
}

\usepackage[colorlinks,
            pdffitwindow=false,
            plainpages=false,
            pdfpagelabels=true,
            pdfpagemode=UseOutlines,
            pdfpagelayout=SinglePage,
            bookmarks=false,
            colorlinks=true,
            hyperfootnotes=false,
            linkcolor=blue,
            citecolor=green!50!black]{hyperref}

\usepackage[hmargin=2cm,vmargin=2.5cm]{geometry}
\usepackage[bf]{caption}

\DeclareMathAlphabet{\mathpzc}{OT1}{pzc}{m}{it}

% define and configure math operators and symbols
\newcommand{\subfiguretitle}[1]{{\scriptsize{#1}} \\[1mm]}
\newcommand{\R}{\mathbb{R}}                                     % real numbers
                                     % complex numbers
\providecommand{\abs}[1]{\left\lvert #1 \right\rvert}           % absolute value
\providecommand{\norm}[1]{\left\lVert #1 \right\rVert}          % norm

% matricization of tensors
\newcommand{\mat}[3]{#1 \left| \begin{array}{@{}l@{}} \scriptstyle #3 \\ \scriptstyle #2 \end{array}  \right.}

% end example symbol
\newcommand\xqed[1]{\leavevmode\unskip\penalty9999 \hbox{}\nobreak\hfill \quad\hbox{#1}}
\newcommand{\exampleSymbol}{\xqed{$\triangle$}}

\DeclareMathOperator{\diag}{diag}

% define and configure math environments
\newtheorem{theorem}{Theorem}[section]

\newtheorem{lemma}[theorem]{Lemma}
\newtheorem{proposition}[theorem]{Proposition}
\newtheorem{definition}[theorem]{Definition}
\theoremstyle{definition}
\newtheorem{example}[theorem]{Example}
\newtheorem{remark}[theorem]{Remark}

% change default settings for floats

% document title and author
\title{Tensor-based dynamic mode decomposition}
\author[1]{Stefan Klus}
\author[1]{Patrick Gel\ss}
\author[2]{Sebastian Peitz}
\author[1,3]{Christof Sch\"utte}
\affil[1]{\normalsize Department of Mathematics and Computer Science, Freie Universit\"at Berlin, Germany}
\affil[2]{\normalsize Department of Mathematics, University of Paderborn, Germany}
\affil[3]{\normalsize Zuse Institute Berlin, Germany}
\date{}

\begin{document}
\maketitle

\begin{abstract}
Dynamic mode decomposition (DMD) is a recently developed tool for the analysis of the behavior of complex dynamical systems. In this paper, we will propose an extension of DMD that exploits low-rank tensor decompositions of potentially high-dimensional data sets to compute the corresponding DMD modes and eigenvalues. The goal is to reduce the computational complexity and also the amount of memory required to store the data in order to mitigate the curse of dimensionality. The efficiency of these tensor-based methods will be illustrated with the aid of several different fluid dynamics problems such as the von K\'arm\'an vortex street and the simulation of two merging vortices.
\end{abstract}

\section{Introduction}

Dynamic mode decomposition, which was first introduced in~\cite{SS08}, is a powerful tool for analyzing the behavior of complex dynamical systems and can, for instance, be used to identify low-order dynamics~\cite{TRLBK14}. Over the last years, several variants such as exact, optimized, or sparsity-promoting DMD have been proposed~\cite{SS08, TRLBK14, CTR12, JSN14}. It was also shown that DMD is closely related to the Koopman operator analysis. A generalization of DMD called extended dynamic mode decomposition (EDMD) is presented in \cite{WKR15} and has been developed for the approximation of the Koopman operator and its eigenvalues, eigenfunctions, and eigenmodes. In the same way, DMD and EDMD can be used to approximate the Perron--Frobenius operator -- the adjoint of the Koopman operator -- as shown in~\cite{KKS16}.

The amount of data that can be analyzed using methods like DMD or EDMD is limited. Due to the so-called \emph{curse of dimensionality}, analyzing high-dimensional problems becomes infeasible. This can be mitigated by exploiting low-rank tensor approximation approaches. Several tensor formats such as the canonical tensor format, the Tucker format, or the tensor-train format (TT-format) have been proposed. While the canonical format would be optimal from a simplicity and efficiency point of view, it was shown to be numerically unstable \cite{dSL08, HRS12}. Our method relies on the tensor-train decomposition~\cite{Ose11}, which can be regarded as a multi-dimensional generalization of the conventional singular value decomposition (SVD).

The goal of this paper is to extend DMD to use tensors instead of vectors so that low-rank approximations of the data can be utilized in order to reduce the computational complexity and also the amount of memory required to store the data and the resulting linear operators and their eigenfunctions, eigenvalues, and corresponding modes. To this end, we will have to compute singular value decompositions and pseudoinverses of tensor unfoldings. Recently, a method to compute an approximation of the pseudoinverse of operators in tensor-train format, which relies on the solution of an optimization problem, was proposed in~\cite{LC16}. The optimization problem is solved with the Modified Alternating Linear Scheme (MALS) \cite{HRS12}, breaking the problem into smaller subproblems which can be solved with conventional methods. We will use a different approach here. Since the pseudoinverse of a matrix can be computed based on a singular value decomposition, the question now is whether the TT-representation of a tensor itself already contains information about the pseudoinverse that can be exploited without necessitating the solution of an optimization problem.

The outline of this paper is as follows: Chapter \ref{sec:Dynamic mode decomposition} briefly recapitulates the properties of the pseudoinverse of a matrix and describes standard and exact DMD. Chapter~\ref{sec:Tensor-based dynamic mode decomposition} introduces tensors and different tensor representations, in particular the TT-format. Furthermore, we will show how the pseudoinverse required for DMD can be obtained from a given tensor decomposition of the data. Then we will present a reformulation of DMD using tensors instead of matrices and vectors. In Chapter~\ref{sec:Numerical results}, numerical results for different fluid dynamics problems will be presented. Chapter~\ref{sec:Conclusion} concludes with a brief summary, open problems, and future work.

\section{Dynamic mode decomposition}
\label{sec:Dynamic mode decomposition}

In this section, we will introduce DMD and show how the DMD modes and eigenvalues can be computed efficiently using a reduced singular value decomposition of the data. The goal is then to rewrite these methods in terms of tensors, exploiting low-rank decompositions of potentially high-dimensional data. These tensor decomposition techniques will be introduced in Section~\ref{sec:Tensor-based dynamic mode decomposition}.

DMD decomposes high-dimensional data into coupled spatial-temporal modes and can be regarded as a combination of a principal component analysis (PCA) in the spatial domain and a Fourier analysis in the frequency domain~\cite{BJOK14}. The DMD modes often correspond to coherent structures in the flow, the DMD eigenvalues can be interpreted as growth rates and frequencies of the corresponding modes. Thus, it is often used to analyze oscillatory behavior~\cite{TRLBK14}. Originally applied to time series data, DMD has been generalized to analyze pairs of $ n $-dimensional data vectors $ x_i $ and $ y_i = F(x_i) $, $ i = 1, \dots, m $, written in matrix form as
\begin{equation} \label{eq:DMD data}
    X =
    \begin{bmatrix}
        x_1 & x_2 & \cdots & x_m
    \end{bmatrix}
    \quad \text{and} \quad
    Y =
    \begin{bmatrix}
        y_1 & y_2 & \cdots & y_m
    \end{bmatrix}.
\end{equation}
Here, $ F : \R^n \to \R^n $ can be any (non-)linear dynamical system. Thus, for a given time series $ \{ z_0, \, z_1, \, \dots, \, z_m\} $, that is $ z_i = F^i(z_0) $, we obtain
\begin{equation}
    X =
    \begin{bmatrix}
        z_0 & z_1 & \cdots & z_{m-1}
    \end{bmatrix}
    \quad \text{and} \quad
    Y =
    \begin{bmatrix}
        z_1 & z_2 & \cdots & z_m
    \end{bmatrix}.
\end{equation}
Assuming there exists a linear operator $ A $ that describes the dynamics of the system such that $ y_i = A x_i $, define
\begin{equation} \label{eq:DMD A}
    A = Y X^+,
\end{equation}
where $ ^+ $ denotes the pseudoinverse. The matrix $ A $ minimizes the cost function $ \norm{A X - Y}_F $, where $ \norm{.}_F $ denotes the Frobenius norm. The DMD modes and eigenvalues are then defined to be the eigenvectors and eigenvalues of~$ A $. Using $ X^+ = X^T (X X^T)^+ $, we obtain
\begin{equation}
    A = (Y X^T) (X X^T)^+.
\end{equation}
The difference is that now the pseudoinverse of an $ (n \times n) $-matrix needs to be computed and not the pseudoinverse of an $ (n \times m) $-matrix, which is advantageous if $ n \ll m $. In our fluid dynamics examples, however, the dimension $ n $ of the problem is typically much larger than the number of snapshots $ m $.

\subsection{Singular value decomposition and the pseudoinverse}

Before we begin with the conventional formulation of the DMD algorithm, let us briefly recapitulate the properties of the standard singular value decomposition and the standard pseudoinverse of a matrix. These properties and definitions will be used and generalized in Section~\ref{sec:Tensor-based dynamic mode decomposition} for the computation of singular value decompositions and pseudoinverses of tensor unfoldings.

Let $ M \in \R^{n_1 \times n_2} $ be a matrix. It is well known that the pseudoinverse $ M^+ \in \R^{n_2 \times n_1} $ can be computed by a (compact/reduced) singular value decomposition of $ M $. Assume that $ M = U \, \Sigma \, V^T $, where $ U \in \R^{n_1 \times s} $, $ V \in \R^{n_2 \times s} $, and $ \Sigma = \diag(\sigma_1, \dots, \sigma_s) \in \R^{s \times s} $ is a diagonal matrix containing only the nonzero singular values. Then the pseudoinverse is given by
\begin{equation*}
    M^+ = V \, \Sigma^{-1}  \, U^T,
\end{equation*}
where $ \Sigma^{-1} = \diag(\sigma_1^{-1}, \dots, \sigma_s^{-1}) $. In what follows, we will assume that the singular values are sorted in decreasing order, i.e.~$\sigma_1 \ge \sigma_2 \ge \dots \ge \sigma_s > 0 $. Let us now recall the definition of the tensor product. For two vectors $v \in \R^{n_1}$ and $w \in \R^{n_2}$, the tensor product $ v \otimes w \in \R^{n_1 \times n_2} $ is given by
\begin{equation} \label{eq: outer tensor product}
    (v \otimes w)_{i,j} = (v \cdot w^T)_{i,j} = v_i \cdot w_j.
\end{equation}

\begin{example}
For a rank-$ s $ matrix
\begin{equation*}
    M = U \, \Sigma \, V^T
      = \sum_{i=1}^s \sigma_i u_i v_i^T
      = \sum_{i=1}^s \sigma_i u_i \otimes v_i,
\end{equation*}
we would obtain
\begin{equation*}
    M^+ = V \, \Sigma^{-1} \, U^T
        = \sum_{i=1}^s \frac{1}{\sigma_i} v_i u_i^T
        = \sum_{i=1}^s \frac{1}{\sigma_i} v_i \otimes u_i. \tag*{\exampleSymbol}
\end{equation*}
\end{example}

The example shows that in order to obtain the pseudoinverse we simply exchange the roles of the left and right singular vectors and divide each tensor product $ v_i \otimes u_i $ by the corresponding singular value $ \sigma_i $. These properties will be used later on to construct the pseudoinverse of a tensor given in TT-format, which is based on successive singular value decompositions.

\subsection{Computation of DMD modes and eigenvalues}

There are different algorithms to compute the DMD modes and eigenvalues without explicitly computing $ A $ which rely on a compact singular value decomposition of $ X $. The standard DMD version is shown in Algorithm~\ref{alg:StandardDMD}.

\begin{algorithm}[htb]
    \caption{Standard DMD algorithm.}
    \label{alg:StandardDMD}
    \begin{algorithmic}[1]
        \State Compute the compact SVD of $ X $, given by $ X = U \, \Sigma \, V^T $ with $ U \in \R^{n \times s} $, $ V \in \R^{m \times s} $, and $ \Sigma \in \R^{s \times s} $.
        \State Define $ \tilde{A} = U^T Y V \Sigma^{-1} $.
        \State Compute eigenvalues and eigenvectors of $ \tilde{A} $, i.e.~$ \tilde{A} w = \lambda w $.
        \State The DMD mode corresponding to the eigenvalue $ \lambda $ is then defined as $ \varphi = U w $.
    \end{algorithmic}
\end{algorithm}

Algorithm~\ref{alg:ExactDMD} shows a variant called exact DMD. The modes computed by the standard DMD algorithm are simply the modes computed by the exact DMD algorithm projected onto the range of $ X $. For a more detailed description, we refer to~\cite{TRLBK14}.

\begin{algorithm}[htb]
    \caption{Exact DMD algorithm.}
    \label{alg:ExactDMD}
    \begin{algorithmic}[1]
        \State Execute steps 1 to 3 of Algorithm~\ref{alg:StandardDMD}.
        \State The DMD mode corresponding to the eigenvalue $ \lambda $ is then defined as $ \varphi = \frac{1}{\lambda} Y V \Sigma^{-1} w $.
    \end{algorithmic}
\end{algorithm}

The matrix $A$ defined in~\eqref{eq:DMD A} and the matrix $\tilde{A}$ given in Algorithm~\ref{alg:StandardDMD} share the same eigenvalue spectrum. However, the DMD algorithm computes only the nonzero eigenvalues. If $\lambda$ is an eigenvalue of $A$ corresponding to the eigenvector $\varphi$, i.e.~$A \varphi = \lambda \varphi$, then it follows that $\tilde{A}w = \lambda w$ with $w = U^T \varphi$. Conversely, if we have $\tilde{A} w = \lambda w$ and define $\varphi = \frac{1}{\lambda} Y V \Sigma^{-1} w $ as in Algorithm \ref{alg:ExactDMD}, then $A v = \lambda v$. A proof of this result can also be found in~\cite{TRLBK14}. In order to illustrate the idea behind DMD, let us analyze a simple example.

\begin{example} \label{ex:Karman 2D}
Consider the von K\'arm\'an vortex street. The system describes the flow past a two-dimensional cylinder which -- for a certain range of Reynolds numbers -- results in a repeating pattern of vortices. A similar example and more details about the characteristic properties of the system can be found in~\cite{TRLBK14}. Snapshots of the solution of this partial differential equation at intermediate time steps are shown in Figure~\ref{fig:Karman}a. We discretized the domain using a $ 60 \times 120 $ grid and generated $ 101 $ snapshots of the solution at equidistant time points. The corresponding DMD modes are shown in Figure~\ref{fig:Karman}b. \exampleSymbol

\begin{figure}[htb]
    \centering
    \subfiguretitle{a)}
    \includegraphics[width=0.95\textwidth]{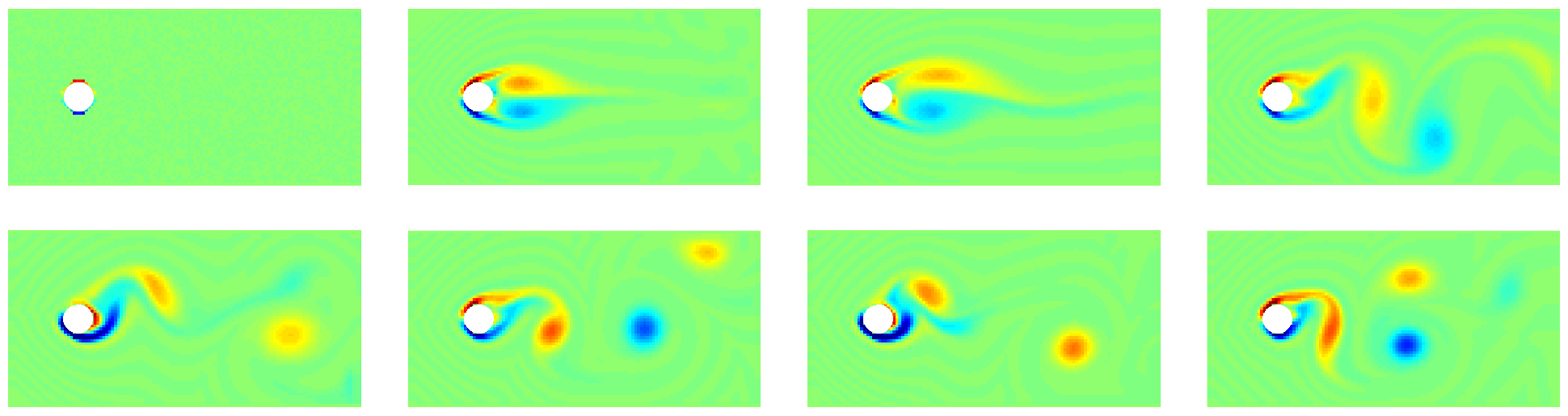} \\
    \subfiguretitle{b)}
    \includegraphics[width=0.95\textwidth]{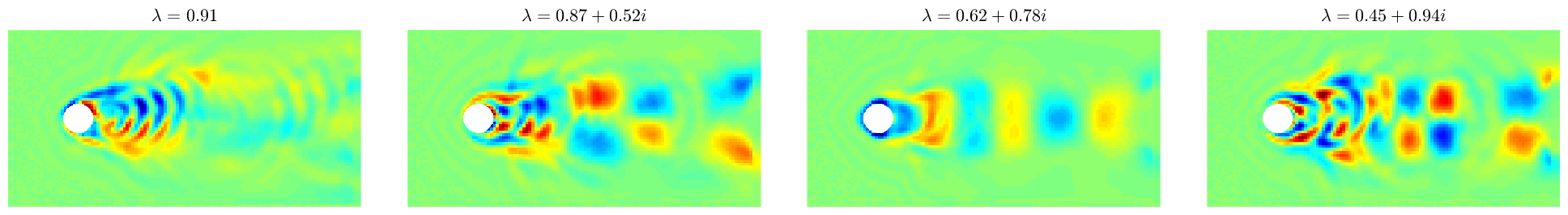}
    \caption{a) Simulation of the von K\'arm\'an vortex street. b) DMD modes corresponding to eigenvalues close to 1.}
    \label{fig:Karman}
\end{figure}

\end{example}

Note that for this example we reshaped each $ 60 \times 120 $ snapshot matrix as a vector, resulting in data matrices $ X, Y \in \R^{7200 \times 100} $. Our reformulation of DMD, on the other hand, will directly operate on tensors $ \mathbf{X}, \mathbf{Y} \in \R^{60 \times 120 \times 100} $.

\section{Tensor-based dynamic mode decomposition}
\label{sec:Tensor-based dynamic mode decomposition}

In this section, we will first introduce different tensor formats and then show how the pseudoinverse of a tensor unfolding can be computed efficiently exploiting properties of the TT-decomposition. Furthermore, we will rewrite DMD in terms of tensors. The aim is to exploit low-rank tensor approximation methods to analyze high-dimensional dynamical systems that could not be handled before due to the curse of dimensionality.

\subsection{The TT-format}

Over the last years, different tensor formats such as the canonical format, the Tucker format, and the tensor-train format have been developed, see e.g.~\cite{BM02, GKT13, Hac12, Hac14}. A frequently used format is the TT-format~\cite{Ose11}. A tensor $ \mathbf{x} \in \R^{n_1 \times \dots \times n_d} $ is decomposed into $ d $ component tensors $ \mathbf{x}^{(i)} $ of at most order three, where the first and last component tensors of order two are, for simplicity, often interpreted as tensors of order three with the additional requirement that $ r_0 = r_d = 1 $. The entries of $ \mathbf{x} $ are given by
\begin{equation}
    \begin{split}
        \mathbf{x}_{i_1, \dots, i_d}
            &= \sum_{k_0=1}^{r_0} \dots \sum_{k_{d}=1}^{r_{d}}
            \mathbf{x}^{(1)}_{k_0, i_1, k_1} \cdot \mathbf{x}^{(2)}_{k_1, i_2, k_2} \cdot \dotsc \cdot \mathbf{x}^{(d-1)}_{k_{d-2}, i_{d-1}, k_{d-1}} \cdot \mathbf{x}^{(d)}_{k_{d-1}, i_d, k_d} \\
            &= \mathbf{x}^{(1)}_{:, i_1, :} \cdot \mathbf{x}^{(2)}_{:, i_2, :} \cdot \dotsc \cdot \mathbf{x}^{(d-1)}_{:, i_{d-1}, :} \cdot \mathbf{x}^{(d)}_{:, i_d,:},
    \end{split}
\end{equation}
where the more compact second formulation uses Matlab's colon notation. With the aid of the tensor product \eqref{eq: outer tensor product}, the whole tensor can then be represented as
\begin{equation}
    \mathbf{x} = \sum_{k_0=1}^{r_0} \dots \sum_{k_d=1}^{r_d} \mathbf{x}^{(1)}_{k_0,:,k_1} \otimes \mathbf{x}^{(2)}_{k_1,:,k_2} \otimes \dots \otimes \mathbf{x}^{(d-1)}_{k_{d-2},:,k_{d-1}} \otimes \mathbf{x}^{(d)}_{k_{d-1},:,k_d}.
\end{equation}
The vector $ r = [r_0, \dots, r_d] $ contains the ranks of the TT-tensor and determines the complexity of the representation. The lower the ranks, the lower the memory consumption and the computational costs. One of the main advantages of the TT-format, compared to the canonical format, is its stability from an algorithmic point of view~\cite{dSL08, HRS12}.

For computational aspects, it is helpful to reshape tensors into matrices and vectors. In order to describe matricizations and vectorizations -- also called \emph{tensor unfoldings} --, we first define a bijection $\phi_N$ for the ordered set $N = (n_1, \ldots, n_d)$ by
\begin{equation} \label{bijection_1}
    \begin{gathered}
        \phi_N : \{1, \ldots, n_1 \} \times \ldots \times \{1, \ldots, n_d \} \rightarrow \{1, \ldots, \prod_{\mu=1}^{d} n_\mu\}, \\
        (i_1, \ldots, i_d) \mapsto \phi_N(i_1, \ldots, i_d).
    \end{gathered}
\end{equation} 
Using the \emph{little-endian} convention, this bijection is defined as
\begin{equation} \label{eq_phi_formula}
    \phi_N(i_1, \ldots, i_d) = 1+ (i_1 -1) + \ldots + (i_d -1) \cdot n_1 \cdot \ldots \cdot n_{d-1} = 1 + \sum_{\mu=1}^{d} (i_\mu -1) \prod_{\nu=1}^{\mu-1} n_\nu.
\end{equation} 
If the definition of $N$ is clear from the context, we write $\phi_N(i_1, \ldots , i_d) = \overline{i_1 , \ldots , i_d}$.
\begin{definition}
Let  $\mathbf{x} \in \mathbb{R}^{n_1 \times \dots \times n_d}$ be a tensor. For the two ordered subsets $N' = (n_{1}, \ldots, n_{l})$ and $N'' = (n_{l+1}, \ldots, n_{d})$ of $ N $, $ 1 \leq l \leq d-1 $, the matricization $\mat{\mathbf{x}}{N'}{N''} \in \R^{(n_1 \cdot \dotsc \cdot n_l) \times (n_{l+1} \cdot \dotsc \cdot n_d)} $ of $\mathbf{x}$ with respect to $ N' $ and $ N''$ is given by
\begin{equation} \label{eq_matricization}
    \left( \mat{\mathbf{x}}{N'}{N''}\right)_{\phi_{N'} (i_{1}, \ldots, i_{l}), \phi_{N''}(i_{l+1}, \ldots, i_{d})} = \left( \mat{\mathbf{x}}{N'}{N''}\right)_{\overline{i_{1}, \ldots, i_{l}}, \overline{i_{l+1}, \ldots, i_{d}}} = \mathbf{x}_{i_1, \ldots, i_d}.
\end{equation}
Accordingly, the vectorization $\mat{\mathbf{x}}{N}{~} \in \R^{n_1 \cdot \dotsc \cdot n_d}$ of a tensor $\mathbf{x} \in \mathbb{R}^{n_1 \times \dotsc \times n_d}$ is given by a matricization of $\mathbf{x}$ with $N' = N$ and $N'' = { \varnothing } $, i.e.
\begin{equation} \label{eq:vectorization}
    \left( \mat{\mathbf{x}}{N}{~}\right)_{\phi_{N}(i_{1}, \ldots, i_{d})} = \left( \mat{\mathbf{x}}{N}{~}\right)_{\overline{i_{1}, \ldots, i_{d}}} = \mathbf{x}_{i_1, \ldots, i_d}.
\end{equation}
\end{definition}

\begin{remark}
Note that here and in what follows vectors $\mathbf{x}^{(\mu)}_{k_{\mu-1}, :, k_\mu}$ as well as vectorizations $ \mat{\mathbf{x}}{N}{~} $ of tensors are regarded as column vectors.
\end{remark}

If a tensor $\mathbf{x}$ is given in full format, i.e.~$\mathbf{x}$ is indeed a $ d $-dimensional array, Algorithm~\ref{alg:TT-decomposition} can be used to compute an exact ($\varepsilon = 0$) or an approximated ($\varepsilon >0$) TT-decomposition of $\mathbf{x}$, respectively. For more details, see~\cite{Ose11}. Here, $ U_{\mu} $ and $ V_{\mu} $ denote the matrices containing the first $ {r_\mu} $ singular vectors and $ \Sigma_{\mu} $ denotes the diagonal matrix comprising the first $ {r_\mu} $ singular values.

\begin{algorithm}[htbp]
    \caption{Convert a tensor given in full format into the TT-format.}
    \label{alg:TT-decomposition}
    \begin{algorithmic}[1]
    \State Given a tensor $ \mathbf{x} \in \R^{n_1 \times \cdots \times n_d} $ in full format and a threshold $ \varepsilon $.
    \For{$ \mu =  1, ..., d-1 $}
        \State $ M = \mat{\mathbf{x}}{r_{\mu-1}, n_{\mu}}{n_{\mu+1}, \dotsc, n_d} $.
        \State Compute SVD of $ M $, i.e.~$ M = U_\mu \Sigma_\mu V_\mu^T $ with $\Sigma_\mu \in \R^{s \times s}$.
        \State Set $ r_\mu \leq s $ to the largest index such that $ (\Sigma_\mu)_{i,i} > \varepsilon $ for $i \leq r_{\mu}$.
        \State Discard rows and columns of $U_\mu$, $\Sigma_\mu$, and $V_\mu$ corresponding to singular values smaller than or equal to $ \varepsilon $.
        \State Set $ \mathbf{y}^{(\mu)} \in \R^{r_{\mu -1} \times  n_{\mu} \times r_{\mu}}$ to a reshaped version of $U_\mu$ with $ \mathbf{y}^{(\mu)}_{k_{\mu-1}, i_{\mu}, k_{\mu}} = (U_\mu)_{\overline{k_{\mu-1}, i_{\mu}}, k_{\mu}}$.
        \State Define remainder $ \mathbf{x} = \Sigma_\mu \, V_\mu^T \in \R^{r_\mu \times n_{\mu +1} \cdot \dotsc \cdot n_d} $.
    \EndFor
    \State Set $ d $-th core to $ \mathbf{y}^{(d)}_{:,:,1} = \mathbf{x} $.
    \State The tensor $ \mathbf{y} $ with cores $ \mathbf{y}^{(1)}, \dots, \mathbf{y}^{(d)} $ and ranks $ r_0, \dots, r_d $ is then an approximation of the initial tensor $ \mathbf{x} $.
    \end{algorithmic}
\end{algorithm}

Orthonormality of tensor trains plays an important role, in particular when we want to compute pseudoinverses of a tensor given in TT-format.

\begin{definition}
A TT-core $\mathbf{x}^{(\mu)} \in \mathbb{R}^{r_{\mu-1} \times n_\mu \times r_\mu}$ is called \emph{left-orthonormal} if 
\begin{equation} \label{eq:left-orth}
    \left( \mat{\mathbf{x}^{(\mu)}}{r_{\mu-1}, n_\mu}{r_\mu} \right)^T \cdot 
    \left( \mat{\mathbf{x}^{(\mu)}}{r_{\mu-1}, n_\mu}{r_\mu} \right) = I \in \mathbb{R}^{r_\mu \times r_\mu}.
\end{equation}
Correspondingly, $\mathbf{x}^{(\mu)}$ is called \emph{right-orthonormal} if 
\begin{equation} \label{eq:right-ortho}
    \left( \mat{\mathbf{x}^{(\mu)}}{r_{\mu-1}}{n_\mu, r_\mu} \right) \cdot
    \left( \mat{\mathbf{x}^{(\mu)}}{r_{\mu-1}}{n_\mu, r_\mu} \right)^T  = I \in \mathbb{R}^{r_{\mu-1} \times r_{\mu-1}}.
\end{equation}
\end{definition}

Naturally, the first $d-1$ TT-cores of $\mathbf{y}$ as constructed in Algorithm \ref{alg:TT-decomposition} are left-orthonormal while the last core is right-orthogonal, but not right-orthonormal, see the following Lemma.

\begin{lemma} \label{lem:TT properties}
Due to the construction of the TT-representation of a tensor as described in Algorithm~\ref{alg:TT-decomposition}, it holds that:
\begin{enumerate}
\item $ \mat{\mathbf{y}^{(\mu)}}{r_{\mu-1}, n_\mu}{r_\mu} $ is left-orthonormal for $\mu = 1, \dotsc, d-1$.
\item $ \left( \mat{\mathbf{y}^{(d)}}{r_{d-1}}{n_d, r_d}\right) \cdot  \left( \mat{\mathbf{y}^{(d)}}{r_{d-1}}{n_d, r_d}\right)^T = \Sigma_{d-1}^2$, where $\Sigma_{d-1}$ is part of the last SVD in Algorithm~\ref{alg:TT-decomposition}.
\end{enumerate}
\end{lemma}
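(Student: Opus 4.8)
The plan is to unwind the reshaping and matricization operations performed in Algorithm~\ref{alg:TT-decomposition} and to read off both claims directly from the orthonormality of the factors produced by the compact SVDs. No induction is needed: each core $\mathbf{y}^{(\mu)}$ with $\mu \le d-1$ is, up to reshaping, exactly the truncated left-singular-vector matrix $U_\mu$ of the $\mu$-th SVD, while $\mathbf{y}^{(d)}$ is exactly the remainder $\Sigma_{d-1} V_{d-1}^T$ left over after the last SVD.

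For the first statement, fix $\mu \in \{1, \dots, d-1\}$. By the reshaping performed in Algorithm~\ref{alg:TT-decomposition} we have $\mathbf{y}^{(\mu)}_{k_{\mu-1}, i_\mu, k_\mu} = (U_\mu)_{\overline{k_{\mu-1}, i_\mu}, k_\mu}$, where the overline denotes the little-endian index map~\eqref{eq_phi_formula} applied to $(r_{\mu-1}, n_\mu)$. Comparing this with the definition~\eqref{eq_matricization} of $\mat{\mathbf{y}^{(\mu)}}{r_{\mu-1}, n_\mu}{r_\mu}$ — whose row index is also $\overline{k_{\mu-1}, i_\mu}$ and whose column index is $k_\mu$ — shows that this matricization is nothing but the matrix $U_\mu$. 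Since $U_\mu$ is obtained from a compact SVD and then restricted to its first $r_\mu$ columns, and a subset of an orthonormal family of columns is still orthonormal, we get $U_\mu^T U_\mu = I \in \R^{r_\mu \times r_\mu}$, which is precisely condition~\eqref{eq:left-orth}. This proves the first claim.

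For the second statement, I would trace the last pass $\mu = d-1$ of the loop together with the closing assignment $\mathbf{y}^{(d)}_{:,:,1} = \mathbf{x}$. After the SVD $M = U_{d-1}\Sigma_{d-1}V_{d-1}^T$ and the truncation, the remainder is set to $\mathbf{x} = \Sigma_{d-1} V_{d-1}^T \in \R^{r_{d-1} \times n_d}$, and this becomes $\mathbf{y}^{(d)}_{:,:,1}$. Because $r_d = 1$, the matricization $\mat{\mathbf{y}^{(d)}}{r_{d-1}}{n_d, r_d}$ is the $r_{d-1} \times n_d$ matrix with $(k_{d-1}, i_d)$ entry $\mathbf{y}^{(d)}_{k_{d-1}, i_d, 1}$, i.e.\ it equals $\Sigma_{d-1} V_{d-1}^T$. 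Hence
\begin{equation*}
    \left( \mat{\mathbf{y}^{(d)}}{r_{d-1}}{n_d, r_d}\right)
    \left( \mat{\mathbf{y}^{(d)}}{r_{d-1}}{n_d, r_d}\right)^T
    = \Sigma_{d-1}\, V_{d-1}^T V_{d-1}\, \Sigma_{d-1}
    = \Sigma_{d-1}^2,
\end{equation*}
using $V_{d-1}^T V_{d-1} = I$ (the columns of the SVD factor $V_{d-1}$ are orthonormal, and truncation preserves this) together with the symmetry of the diagonal matrix $\Sigma_{d-1}$.

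The only genuinely delicate point is the index bookkeeping in the first part: one must verify that the composite operation ``reshape $U_\mu$ into an $r_{\mu-1} \times n_\mu \times r_\mu$ array, then matricize grouping the first two indices'' is the identity on matrices. This reduces to checking that the reshaping in Algorithm~\ref{alg:TT-decomposition} and the matricization~\eqref{eq_matricization} use the same little-endian ordering~\eqref{eq_phi_formula} on the pair $(k_{\mu-1}, i_\mu)$; once this is granted, both parts are immediate from elementary properties of the SVD.
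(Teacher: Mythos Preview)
Your proof is correct and follows exactly the same approach as the paper: identify $\mat{\mathbf{y}^{(\mu)}}{r_{\mu-1}, n_\mu}{r_\mu}$ with $U_\mu$ (so that $U_\mu^T U_\mu = I$ gives left-orthonormality) and identify $\mat{\mathbf{y}^{(d)}}{r_{d-1}}{n_d, r_d}$ with $\Sigma_{d-1} V_{d-1}^T$ (so that $V_{d-1}^T V_{d-1} = I$ gives the second claim). You have in fact been more careful than the paper about the index bookkeeping, which the paper leaves implicit.
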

\begin{proof}
The properties are a result of the fact that the cores $ \mathbf{y}^{(\mu)} $, $\mu = 1, \dotsc, d-1$, are reshaped versions of matrices $U_\mu$ with $U_\mu^T \cdot U_\mu = I$ and $ \mathbf{y}^{(d)} = \Sigma_{d-1} \, V_{d-1}^T $ with $V_{d-1}^T \cdot V_{d-1} = I$.
\end{proof}

In general, we do, however, not want to compute the tensor in full format and then convert it to the TT-format. All the numerical computations should ideally be directly carried out in the TT-format. This could, for instance, mean solving systems of equations, eigenvalue problems, ordinary or partial differential equations, or completion problems using the TT-format, see e.g.~\cite{BG13,DKOS14,GMS16,Dol14,RSS14}. In this way, we automatically compute a low-rank approximation of the solution without necessitating the conversion to the TT-format. The aim is now to exploit properties of the TT-decomposition in order to efficiently compute DMD modes and eigenvalues. For this purpose, we will need the following auxiliary results:

\begin{lemma} \label{lem:Tensor product properties}
The tensor product satisfies
\begin{flalign*}
    && \left( \mat{\left( \mathbf{x}^{(1)} \otimes \dotsc \otimes \mathbf{x}^{(d)} \right)}{n_1, \dotsc, n_d}{~}\right)^T \cdot \left( \mat{\left( \mathbf{y}^{(1)} \otimes \dotsc \otimes \mathbf{y}^{(d)} \right)}{n_1, \dotsc, n_d}{~}\right) &= \displaystyle \prod_{\mu=1}^d \left( \mathbf{x}^{(\mu)}\right)^T \cdot \mathbf{y}^{(\mu)} & \\
    \text{and} &&
    \left\| \mat{\left( \mathbf{x}^{(1)} \otimes \dotsc \otimes \mathbf{x}^{(d)} \right)}{n_1, \dotsc, n_d}{~} \right\|_2 &= \displaystyle \prod_{\mu=1}^d \left\| \mathbf{x}^{(\mu)} \right\|_2.
\end{flalign*}

\end{lemma}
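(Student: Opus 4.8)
The plan is to prove both identities by expanding the vectorizations in terms of the little-endian bijection $\phi_N$ and reducing everything to a repeated application of the rank-one case $d = 2$, which is just the statement $(v \otimes w)_{i,j} = v_i w_j$ from~\eqref{eq: outer tensor product}. First I would observe that, by definition of the tensor product of vectors and of the vectorization~\eqref{eq:vectorization}, the vectorized tensor product $\mat{(\mathbf{x}^{(1)} \otimes \dotsc \otimes \mathbf{x}^{(d)})}{n_1, \dotsc, n_d}{~}$ has as its $\overline{i_1, \dotsc, i_d}$-th entry exactly the product $\mathbf{x}^{(1)}_{i_1} \cdot \mathbf{x}^{(2)}_{i_2} \cdot \dotsc \cdot \mathbf{x}^{(d)}_{i_d}$. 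This is the key structural fact and follows directly from unwinding the definitions; the little-endian ordering guarantees that $\phi_N$ is a bijection, so summing over the multi-index $(i_1, \dotsc, i_d)$ is the same as summing over the flat index $\overline{i_1, \dotsc, i_d}$.

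For the first identity, I would then write the left-hand side as
\begin{equation*}
    \sum_{i_1, \dotsc, i_d} \left( \mathbf{x}^{(1)}_{i_1} \cdot \dotsc \cdot \mathbf{x}^{(d)}_{i_d} \right) \left( \mathbf{y}^{(1)}_{i_1} \cdot \dotsc \cdot \mathbf{y}^{(d)}_{i_d} \right)
        = \sum_{i_1, \dotsc, i_d} \prod_{\mu=1}^d \mathbf{x}^{(\mu)}_{i_\mu} \, \mathbf{y}^{(\mu)}_{i_\mu},
\end{equation*}
and then use the fact that a sum of products over a Cartesian index set factors into a product of sums, i.e.~$\sum_{i_1, \dotsc, i_d} \prod_\mu a^{(\mu)}_{i_\mu} = \prod_\mu \sum_{i_\mu} a^{(\mu)}_{i_\mu}$, with $a^{(\mu)}_{i_\mu} = \mathbf{x}^{(\mu)}_{i_\mu} \mathbf{y}^{(\mu)}_{i_\mu}$. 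Each inner sum $\sum_{i_\mu} \mathbf{x}^{(\mu)}_{i_\mu} \mathbf{y}^{(\mu)}_{i_\mu}$ is precisely $(\mathbf{x}^{(\mu)})^T \cdot \mathbf{y}^{(\mu)}$, which yields the claimed product. The second identity is then an immediate corollary: setting $\mathbf{y}^{(\mu)} = \mathbf{x}^{(\mu)}$ for all $\mu$ gives $\norm{\mat{(\mathbf{x}^{(1)} \otimes \dotsc \otimes \mathbf{x}^{(d)})}{n_1, \dotsc, n_d}{~}}_2^2 = \prod_{\mu=1}^d \norm{\mathbf{x}^{(\mu)}}_2^2$, and taking square roots finishes it since all factors are nonnegative.

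Alternatively, and perhaps more cleanly for exposition, I would prove this by induction on $d$, where the base case $d = 1$ is trivial and the inductive step splits the $(d)$-fold tensor product as $(\mathbf{x}^{(1)} \otimes \dotsc \otimes \mathbf{x}^{(d-1)}) \otimes \mathbf{x}^{(d)}$; one then only needs the $d = 2$ case together with the compatibility of $\phi_N$ with this splitting (block structure of the vectorization). The only real obstacle is notational bookkeeping — keeping the little-endian index convention straight so that the vectorization of a two-fold tensor product genuinely has the block form one expects — but there is no analytic difficulty here; it is a direct computation once the bijection $\phi_N$ is handled carefully. I would present the direct Cartesian-sum argument since it avoids the induction overhead and makes the factorization transparent.
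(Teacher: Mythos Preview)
Your proposal is correct and matches the paper's proof essentially line for line: the paper also expands the inner product as $\sum_{i_1} \cdots \sum_{i_d} \mathbf{x}^{(1)}_{i_1} \cdots \mathbf{x}^{(d)}_{i_d} \cdot \mathbf{y}^{(1)}_{i_1} \cdots \mathbf{y}^{(d)}_{i_d}$ and factors it into $\prod_\mu (\mathbf{x}^{(\mu)})^T \mathbf{y}^{(\mu)}$, then notes that the norm identity is a special case. Your additional remarks about the bijection $\phi_N$ and the optional induction are just more explicit bookkeeping around the same computation.
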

\begin{proof}
The first equation follows from
\begin{equation*}
    \begin{split}
        & \left( \mat{\left( \mathbf{x}^{(1)} \otimes \dotsc \otimes \mathbf{x}^{(d)} \right)}{n_1, \dotsc, n_d}{~}\right)^T \cdot \left( \mat{\left( \mathbf{y}^{(1)} \otimes \dotsc \otimes \mathbf{y}^{(d)} \right)}{n_1, \dotsc, n_d}{~}\right) \\
        & \quad = \sum_{i_1 = 1}^{n_1}  \cdots \sum_{i_d = 1}^{n_d} \mathbf{x}^{(1)}_{i_1} \cdot \dotsc \cdot \mathbf{x}^{(d)}_{i_d} \cdot \mathbf{y}^{(1)}_{i_1} \cdot \dotsc \cdot \mathbf{y}^{(d)}_{i_d}
        = \prod_{\mu=1}^d \left( \mathbf{x}^{(\mu)}\right)^T \cdot \mathbf{y}^{(\mu)},
    \end{split}
\end{equation*}
the second is simply a special case of the first.
\end{proof}

\subsection{Singular value decomposition and the pseudoinverse in TT-format}

Before we consider TT-tensors with arbitrary ranks, let us illustrate the basic idea with a simple example.

\begin{example} \label{ex:rank-1 pseudoinverse}
Assume that we have a rank-one tensor of the form $ \mathbf{x} = \mathbf{x}^{(1)} \otimes \mathbf{x}^{(2)} \otimes \mathbf{x}^{(3)} \otimes \mathbf{x}^{(4)} \in \R^{n_1 \times n_2 \times n_3 \times n_4} $, with $ \mathbf{x}^{(i)} \in \R^{n_i} $, and we want to compute the pseudoinverse of the matricization with respect to the dimensions $ (1, 2) $ and $ (3, 4) $, given by
\begin{equation*}
    M = \mat{\left(\mathbf{x}^{(1)} \otimes \mathbf{x}^{(2)}\right)}{n_1, n_2}{~} \otimes \mat{\left(\mathbf{x}^{(3)} \otimes \mathbf{x}^{(4)}\right)}{n_3, n_4}{~}
    \in \R^{(n_1 \cdot n_2) \times (n_3 \cdot n_4)}.
\end{equation*}
Then
\begin{equation*}
    M^+ = \frac{1}{ \sigma^2 }  \mat{\left(\mathbf{x}^{(3)} \otimes \mathbf{x}^{(4)}\right)}{n_3, n_4}{~} \otimes \mat{\left(\mathbf{x}^{(1)} \otimes \mathbf{x}^{(2)}\right)}{n_1, n_2}{~}
    \in \R^{(n_3 \cdot n_4) \times (n_1 \cdot n_2)},
\end{equation*}
with $ \sigma = \prod_{i=1}^4 \norm{\mathbf{x}^{(i)}}_2 $. This can be seen by writing
\begin{equation*}
    M = \underbrace{\mat{\left(\mathbf{x}^{(1)} \otimes \mathbf{x}^{(2)}\right)}{n_1, n_2}{~}}_{\tilde{u}} \otimes \underbrace{\mat{\left(\mathbf{x}^{(3)} \otimes \mathbf{x}^{(4)}\right)}{n_3, n_4}{~}}_{\tilde{v}}
        = \tilde{u} \otimes \tilde{v}
        = \sigma \underbrace{\left( \frac{1}{\norm{\tilde{u}}} \tilde{u} \right)}_{u} \otimes \underbrace{\left( \frac{1}{\norm{\tilde{v}}} \tilde{v} \right)}_{v}
\end{equation*}
and thus
\begin{equation*}
    M^+ = \frac{1}{\sigma} v \otimes u
        = \frac{1}{\sigma^2} \tilde{v} \otimes \tilde{u}.
\end{equation*}
The formula for $ \sigma $ follows from Lemma~\ref{lem:Tensor product properties}. With a slight abuse of notation, we will write
\begin{equation*} \label{eq:Pseudoinverse of rank-1 tensor}
    \mathbf{x}^+ = \frac{1}{ \sigma^2 } \mathbf{x}^{(3)} \otimes \mathbf{x}^{(4)} \otimes \mathbf{x}^{(1)} \otimes \mathbf{x}^{(2)}.
\end{equation*}
That is, the pseudoinverse can in this case simply be obtained by reordering the cores of the rank-$ 1 $ tensor and by normalizing the tensor product. Note that if the tensor was obtained by the TT-decomposition, then the vectors $ \mathbf{x}^{(1)} $, $ \mathbf{x}^{(2)} $, and $ \mathbf{x}^{(3)} $ are already normalized and we have to divide only by $ \norm{\mathbf{x}^{(4)}}^2 $. A graphical representation of this process is shown in Figure~\ref{fig:Pseudoinverse rank 1}, where we use a similar diagrammatic notation as in \cite{HRS12}. \exampleSymbol

\begin{figure}[htb]
    \centering
    \includegraphics[width=0.7\textwidth]{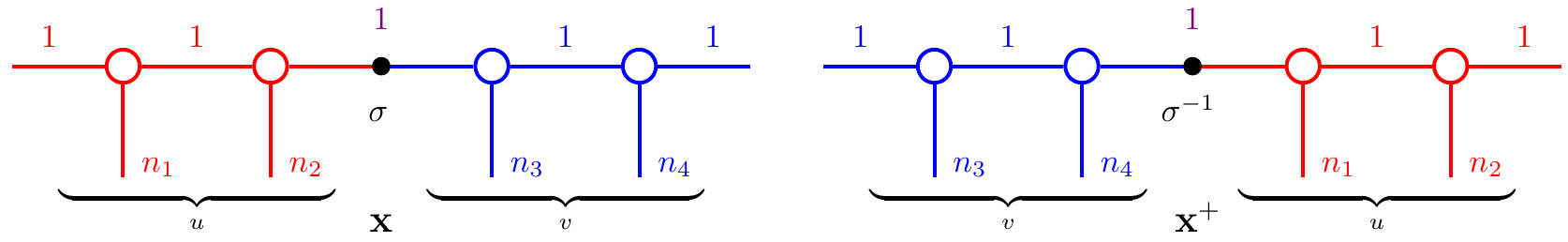}
    \caption{Pseudoinverse of a rank-$ 1 $ tensor with respect to the dimensions $ (1, 2) $ and $ (3, 4) $. The first two and the last two cores are swapped and the tensor is divided by $ \sigma $.}
    \label{fig:Pseudoinverse rank 1}
\end{figure}

\end{example}

Note that the pseudoinverse depends on the matricization of the tensor $ \mathbf{x} $. For a different matricization, for instance with respect to the dimensions $ (1, 2, 3) $ and $ (4) $, we would also obtain a different pseudoinverse $ \mathbf{x}^+ $. We now want to generalize this approach to compute pseudoinverses of arbitrary tensors $ \mathbf{x} $ in TT-format for a given matricization. It turns out that also for tensors with a higher-rank coupling the pseudoinverse can be obtained by reordering the cores after some preprocessing steps. These include the left- and right-orthonormalization, respectively, of the given TT-cores.

\begin{algorithm}[htbp]
    \caption{Left-orthonormalize TT-cores $\mathbf{x}^{(1)}, \dotsc, \mathbf{x}^{(l)}$.}
    \label{alg:left-ortho}
    \begin{algorithmic}[1]
        \State Given tensor $ \mathbf{x} $ and core number $ l $, $1 \leq l \leq d-1$.
        \For{$ \mu = 1, \dots, l $}
            \State Compute QR factorization of $ \mat{\mathbf{x}^{(\mu)}}{r_{\mu-1}, n_\mu}{ r_\mu} $, i.e.~$\mat{\mathbf{x}^{(\mu)}}{r_{\mu-1}, n_\mu}{ r_\mu} = Q \cdot R$ with $Q \in \R^{r_{\mu-1} \cdot n_\mu \times s}$ and $Q^T \cdot Q = I$.
            \State Define $\mathbf{y} \in \R^{r_{\mu-1} \times n_\mu \times s}$ as a reshaped version of $Q$ with $\mathbf{y}_{i,j,k } = Q_{\overline{i, j}, k}$.
            \State Define $\mathbf{z} \in \R^{s \times n_{\mu+1} \times r_{\mu+1} }$ by $\mat{\mathbf{z}}{s}{n_{\mu+1}, r_{\mu+1}} = R \cdot \mat{\mathbf{x}^{(\mu+1)}}{r_{\mu}}{n_{\mu+1}, r_{\mu+1}}$.
            \State Set $\mathbf{x}^{(\mu)}$  to $\mathbf{y}$, $\mathbf{x}^{(\mu+1)}$  to $\mathbf{z}$ and $r_{\mu}$ to $s$.
        \EndFor
    \end{algorithmic}
\end{algorithm}

\begin{algorithm}[htbp]
    \caption{Right-orthonormalize TT-cores $\mathbf{x}^{(l)}, \dotsc, \mathbf{x}^{(d)}$.}
    \label{alg:right-ortho}
    \begin{algorithmic}[1]
        \State Given tensor $ \mathbf{x} $ and core number $ l $, $2 \leq l \leq d$.
        \For{$ \mu = d, \dots, l $}
            \State Compute QR factorization of $\left( \mat{\mathbf{x}^{(\mu)}}{r_{\mu-1}}{n_\mu, r_\mu} \right)^T$, i.e.~$\mat{\mathbf{x}^{(\mu)}}{r_{\mu-1}}{n_\mu, r_\mu} = R^T \cdot Q^T$ with $Q^T \in \R^{s \times n_\mu \cdot r_\mu}$ and $Q^T \cdot Q = I$.
            \State Define $\mathbf{y} \in \R^{s \times n_\mu \times r_\mu}$ as a reshaped version of $Q^T$ with $\mathbf{y}_{i,j,k } = Q^T_{i, \overline{j, k}}$.
            \State Define $\mathbf{z} \in \R^{r_{\mu-2} \times n_{\mu-1} \times s }$ by $\mat{\mathbf{z}}{r_{\mu-2}, n_{\mu-1}}{s} = \mat{\mathbf{x}^{(\mu-1)}}{r_{\mu-2}, n_{\mu-1}}{r_{\mu-1}} \cdot R^T$.
            \State Set $\mathbf{x}^{(\mu)}$  to $\mathbf{y}$, $\mathbf{x}^{(\mu-1)}$  to $\mathbf{z}$ and $r_{\mu-1}$ to $s$.
        \EndFor
    \end{algorithmic}
\end{algorithm}

Algorithm~\ref{alg:left-ortho} shows the left-orthonormalization process. Similarly, the procedure for the right-orthonormalization is shown in Algorithm~\ref{alg:right-ortho}. If we apply the Algorithms~\ref{alg:left-ortho} or~\ref{alg:right-ortho} to a tensor $\mathbf{x}$, then the tensor itself remains the same, the algorithms simply compute a different but equivalent representation. It would also be possible to adapt both algorithms to use SVDs instead of QR factorizations. In this way, similar to Algorithm~\ref{alg:TT-decomposition}, it is possible to truncate the TT-cores during the orthonormalizations.

With the aid of the orthonormalization algorithms, we are now able to compute pseudoinverses of arbitrary tensor unfoldings. The idea is to left-orthonormalize all cores $\mathbf{x}^{(1)}, \dotsc, \mathbf{x}^{(l)}$ and right-orthonormalize all cores $\mathbf{x}^{(l+1)}, \dotsc, \mathbf{x}^{(d)}$ in order to determine the pseudoinverse of a matricization of a tensor train $\mathbf{x}$ with respect to the dimensions $(1, \dotsc, l)$ and $(l+1, \dotsc, d)$. At the same time, we keep the matrix containing the singular values corresponding to an SVD applied to the core $\mathbf{x}^{(l)}$ intact and regard it as a diagonal matrix between two vectorizations. In this way, we can construct a singular value decomposition of the whole tensor. 

The procedure for computing the pseudoinverse is described in Algorithm~\ref{alg:Pseudoinverse} and the steps of the algorithm are also illustrated in Figure~\ref{fig:Pseudoinverse}. Again, we make use of the same diagrammatic notation as in \cite{HRS12}. Each TT-core is depicted by a circle with 3 arms indicating the set of free indices and tensor coupling, i.e.~index contractions, are represented by joining corresponding arms. In order to visualize orthonormal tensor cores we draw half filled circles.

\begin{algorithm}[htbp]
    \caption{Compute the pseudoinverse of a matricization of a TT-tensor $\mathbf{x}$.}
    \label{alg:Pseudoinverse}
    \begin{algorithmic}[1]
    \State Given a tensor $ \mathbf{x} $ in TT-format and core number $1 \leq l \leq d-1$, compute pseudoinverse of $\mat{\mathbf{x}}{n_1, \dotsc, n_l}{ n_{l+1}, \dotsc, n_d}$.
    \State Left-orthonormalize $\mathbf{x}^{(1)}, \dotsc, \mathbf{x}^{(l-1)}$ and right-orthonormalize $\mathbf{x}^{(d)}, \dotsc, \mathbf{x}^{(l+1)}$ using Algorithms~\ref{alg:left-ortho} and~\ref{alg:right-ortho}.
    \State Compute SVD of $\mat{\mathbf{x}^{(l)}}{r_{l-1}, n_l}{r_l}$, i.e.~$\mat{\mathbf{x}^{(l)}}{r_{l-1}, n_l}{r_l} = U \Sigma V^T$ with $\Sigma \in \R^{s \times s}$.
    \State Define $\mathbf{y} \in \R^{r_{l-1} \times n_l \times s}$ as a reshaped version of $U$ with $\mathbf{y}_{i,j,k } = U_{\overline{i, j}, k}$.
    \State Define $\mathbf{z} \in \R^{s \times n_{l+1} \times r_{l+1} }$ by $\mat{\mathbf{z}}{s}{n_{l+1}, r_{l+1}} = V^T \cdot \mat{\mathbf{x}^{(l+1)}}{r_{l}}{n_{l+1}, r_{l+1}}$.
    \State Set $\mathbf{x}^{(l)}$  to $\mathbf{y}$, $\mathbf{x}^{(l+1)}$  to $\mathbf{z}$ and $r_l$ to $s$.
    \State Define $M = \mat{\left( \sum_{k_0 =1}^{r_0} \cdots \sum_{k_{l-1}=1}^{r_{l-1}} \mathbf{x}^{(1)}_{k_0, :, k_1 } \otimes \dotsc \otimes \mathbf{x}^{(l)}_{k_{l-1}, :, : } \right)}{n_1, \dotsc, n_l}{r_l}$.
    \State Define $N = \mat{\left( \sum_{k_{l+1} =1}^{r_{l+1}} \cdots \sum_{k_{d}=1}^{r_{d}} \mathbf{x}^{(l+1)}_{:, :, k_{l+1} } \otimes \dotsc \otimes \mathbf{x}^{(d)}_{k_{d-1}, :, k_d } \right)}{r_l}{n_{l+1}, \dotsc, n_d}$.
    \State Then $ \mat{\mathbf{x}}{n_1, \dotsc, n_l}{ n_{l+1}, \dotsc, n_d} = M \, \Sigma \, N $ and $ \left( \mat{\mathbf{x}}{n_1, \dotsc, n_l}{n_{l+1}, \dotsc, n_d} \right)^+ = N^T \, \Sigma^{-1} \, M^T$.
    \end{algorithmic}
\end{algorithm}

\begin{figure}[htb]
    \centering
    \includegraphics[width=0.7\textwidth]{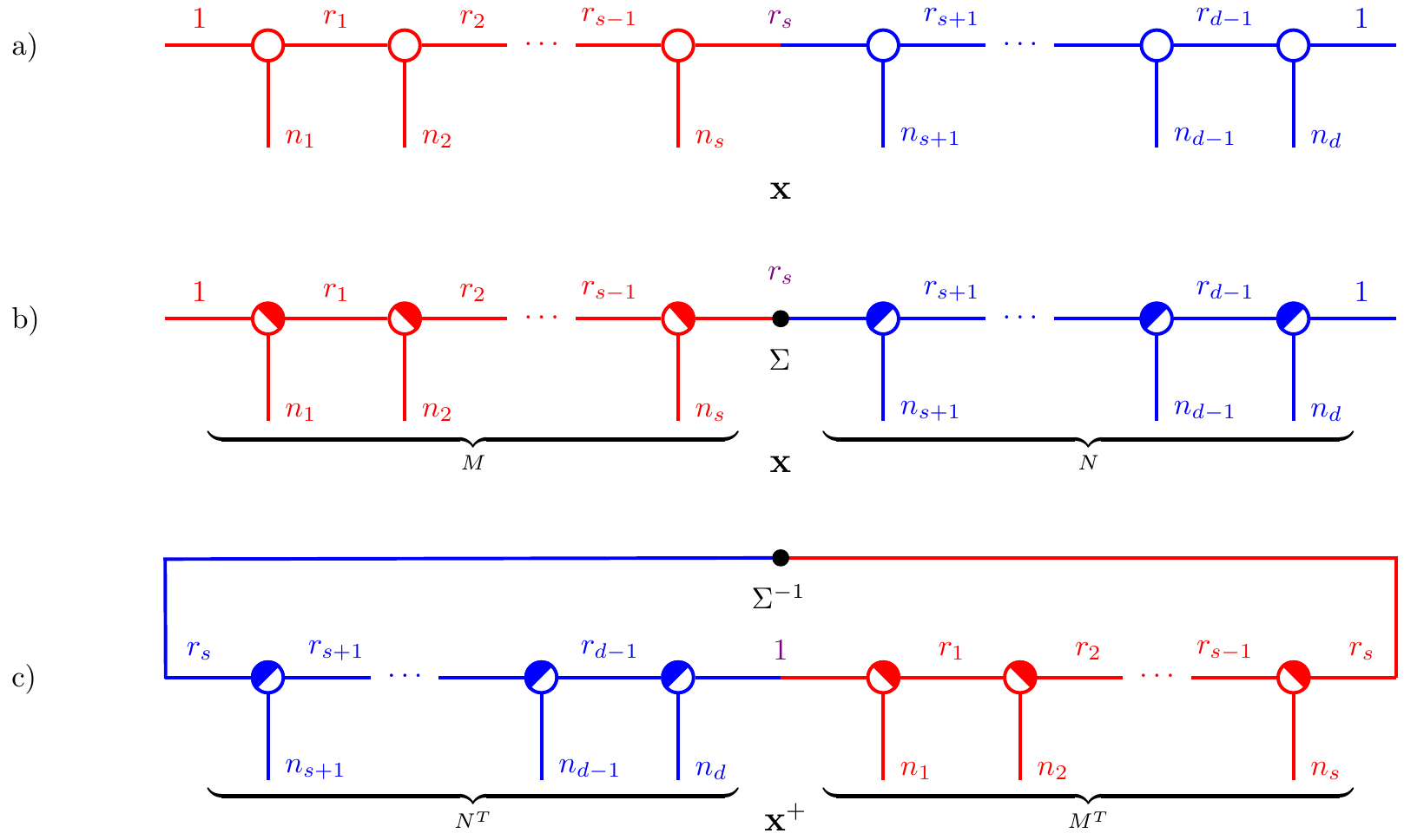}
    \caption{Illustration of the procedure to compute the pseudoinverse of a tensor in TT-format. a) Initial tensor $ \mathbf{x} $. b) Left- and right-orthonormalization of the tensor cores. c) Representation of the pseudoinverse $ \mathbf{x}^+ $.}
    \label{fig:Pseudoinverse}
\end{figure}

\begin{proposition}
Given a tensor $ \mathbf{x} $ and core number $1 \leq l \leq d-1$, Algorithm~\ref{alg:Pseudoinverse} computes the pseudoinverse with respect to the dimensions $ (1, \dots, l) $ and $ (l+1, \dots, d) $.
\end{proposition}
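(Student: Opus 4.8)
The plan is to verify that steps~2--6 of Algorithm~\ref{alg:Pseudoinverse}, together with the regrouping of the core contractions in steps~7 and~8, produce matrices $M$, $\Sigma$, $N$ with $M^T M = I$, $N N^T = I$, $\Sigma$ diagonal and invertible, and $\mat{\mathbf{x}}{n_1,\dotsc,n_l}{n_{l+1},\dotsc,n_d} = M\,\Sigma\,N$. Up to a simultaneous permutation of the columns of $M$, the rows of $N$ and the diagonal of $\Sigma$ (which merely sorts the $\sigma_i$ and affects neither $M\Sigma N$ nor its pseudoinverse), this is a compact singular value decomposition, so the formula recalled in Section~\ref{sec:Dynamic mode decomposition} gives $\bigl(\mat{\mathbf{x}}{n_1,\dotsc,n_l}{n_{l+1},\dotsc,n_d}\bigr)^+ = N^T\,\Sigma^{-1}\,M^T$, which is exactly what step~9 returns. (Here I read the SVD in step~3 as the compact one, so $\Sigma$ has only strictly positive diagonal entries and $\Sigma^{-1}$ makes sense.)

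First I would record the state after step~2. As pointed out in the discussion following Algorithms~\ref{alg:left-ortho} and~\ref{alg:right-ortho}, those routines leave $\mathbf{x}$ itself unchanged and only replace its TT-representation by an equivalent one; afterwards $\mathbf{x}^{(1)},\dotsc,\mathbf{x}^{(l-1)}$ are left-orthonormal and $\mathbf{x}^{(l+1)},\dotsc,\mathbf{x}^{(d)}$ are right-orthonormal. Next I would analyze steps~3--6. Writing the SVD of the middle core as $\mat{\mathbf{x}^{(l)}}{r_{l-1},n_l}{r_l} = U\Sigma V^T$ with $U^T U = V^T V = I$, the new core $\mathbf{x}^{(l)}$ (the reshaped $U$) is left-orthonormal by construction, and the new core $\mathbf{x}^{(l+1)}$, for which $\mat{\mathbf{x}^{(l+1)}}{s}{n_{l+1},r_{l+1}} = V^T\mat{\mathbf{x}^{(l+1)}}{r_l}{n_{l+1},r_{l+1}}$, is again right-orthonormal because $\mat{\mathbf{x}^{(l+1)}}{s}{n_{l+1},r_{l+1}}\bigl(\mat{\mathbf{x}^{(l+1)}}{s}{n_{l+1},r_{l+1}}\bigr)^T = V^T I V = I$. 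After step~6 the cores therefore contract to the tensor $\mathbf{x}$ with the factor $\Sigma$ stripped from the bond linking cores $l$ and $l+1$; putting $\Sigma$ back onto that bond is precisely the identity $\mat{\mathbf{x}}{n_1,\dotsc,n_l}{n_{l+1},\dotsc,n_d} = M\Sigma N$, which I would confirm by substituting $U\Sigma V^T$ for the old matricized core~$l$ in the entrywise formula for $\mathbf{x}_{i_1,\dotsc,i_d}$ and reassociating the sums (i.e.\ using associativity of the tensor-network contraction).

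It then remains to prove $M^T M = I$ and $N N^T = I$, which I would do by induction on the number of contracted cores. Write $M_j$ for the matricization obtained by contracting $\mathbf{x}^{(1)},\dotsc,\mathbf{x}^{(j)}$, so $M_j \in \R^{(n_1\cdots n_j)\times r_j}$ and $M = M_l$. Since the little-endian vectorization makes $M_j$ block-structured in terms of $M_{j-1}$ and the matricization of core~$j$, one obtains $(M_j^T M_j)_{k_j,k_j'} = \sum_{k_{j-1},k_{j-1}'} (M_{j-1}^T M_{j-1})_{k_{j-1},k_{j-1}'}\,\bigl(\mathbf{x}^{(j)}_{k_{j-1},:,k_j}\bigr)^T\mathbf{x}^{(j)}_{k_{j-1}',:,k_j'}$, where the product form of the inner products of vectorized tensor products is the content of Lemma~\ref{lem:Tensor product properties}; the inductive hypothesis $M_{j-1}^T M_{j-1} = I$ reduces this to $\sum_{k_{j-1}} \bigl(\mathbf{x}^{(j)}_{k_{j-1},:,k_j}\bigr)^T\mathbf{x}^{(j)}_{k_{j-1},:,k_j'}$, which equals $\delta_{k_j,k_j'}$ exactly because core~$j$ is left-orthonormal, and the base case $j = 1$ is left-orthonormality of $\mathbf{x}^{(1)}$ (note $r_0 = 1$). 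Since $\mathbf{x}^{(1)},\dotsc,\mathbf{x}^{(l)}$ are all left-orthonormal after step~6, this gives $M^T M = I$; the relation $N N^T = I$ follows from the mirror-image downward induction over the right-orthonormal cores $\mathbf{x}^{(d)},\dotsc,\mathbf{x}^{(l+1)}$. Combining $\mat{\mathbf{x}}{n_1,\dotsc,n_l}{n_{l+1},\dotsc,n_d} = M\Sigma N$, the two orthonormality identities, and the invertibility of $\Sigma$, and invoking the pseudoinverse-from-SVD formula of Section~\ref{sec:Dynamic mode decomposition} (as illustrated for a rank-one tensor in Example~\ref{ex:rank-1 pseudoinverse}), completes the argument. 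The main obstacle is purely bookkeeping: keeping the reshapings between TT-cores and their matricizations consistent in the colon/overline notation and carrying the two inductions through carefully; conceptually the whole argument rests on the single observation that a contraction of left-orthonormal (resp.\ right-orthonormal) cores has orthonormal columns (resp.\ rows).
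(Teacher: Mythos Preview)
Your proposal is correct and follows essentially the same route as the paper: establish that the orthonormalizations and the SVD of the middle core leave $\mathbf{x}$ unchanged, prove $M^T M = I$ by an inductive/iterative collapse over the left-orthonormal cores (via Lemma~\ref{lem:Tensor product properties}), assert $N N^T = I$ by the mirror argument, and then read off the pseudoinverse from the resulting compact SVD. Your explicit check that the updated core $\mathbf{x}^{(l+1)}$ is again right-orthonormal is a detail the paper passes over but implicitly relies on; otherwise the two arguments coincide.
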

\begin{proof}
Since the left- and right-orthonormalization as well as the application of the SVD in line three of Algorithm~\ref{alg:Pseudoinverse} do not change the tensor $\mathbf{x}$ itself, we can express the matricization of $\mathbf{x}$ with respect to the dimensions $ (1, \dots, l) $ and $ (l+1, \dots, d) $ as
\begin{equation*}
    \mat{\mathbf{x}}{n_1, \dotsc, n_l}{ n_{l+1}, \dotsc, n_d} = M \, \Sigma \, N,
\end{equation*}
with $M $, $\Sigma$, and $N$ as given in Algorithm~\ref{alg:Pseudoinverse}. Now, we only have to show that $M^T \cdot M = N \cdot N^T = I \in \R^{r_l \times r_l}$. We obtain
\begin{equation*}
    \begin{split}
        M^T \cdot M &=  \left( \mat{\left( \sum_{k_0 =1}^{r_0} \cdots \sum_{k_{l-1}=1}^{r_{l-1}} \mathbf{x}^{(1)}_{k_0, :, k_1 } \otimes \dotsc \otimes \mathbf{x}^{(l)}_{k_{l-1}, :, : } \right)}{n_1, \dotsc, n_l}{r_l} \right)^T \\
        & \qquad \cdot \mat{\left( \sum_{k'_0 =1}^{r_0} \cdots \sum_{k'_{l-1}=1}^{r_{l-1}} \mathbf{x}^{(1)}_{k'_0, :, k'_1 } \otimes \dotsc \otimes \mathbf{x}^{(l)}_{k'_{l-1}, :, : } \right)}{n_1, \dotsc, n_l}{r_l}.
    \end{split}
\end{equation*}
Considering an entry of $M^T \cdot M$ and using Lemma~\ref{lem:Tensor product properties}, we then get
\begin{equation*}
    \begin{split}
        \left( M^T \cdot M \right)_{i,j} &=  \left( \mat{\left( \sum_{k_0 =1}^{r_0} \cdots \sum_{k_{l-1}=1}^{r_{l-1}} \mathbf{x}^{(1)}_{k_0, :, k_1 } \otimes \dotsc \otimes \mathbf{x}^{(l)}_{k_{l-1}, :, i } \right)}{n_1, \dotsc, n_l}{~} \right)^T \\
        & \qquad \cdot \mat{\left( \sum_{k'_0 =1}^{r_0} \cdots \sum_{k'_{l-1}=1}^{r_{l-1}} \mathbf{x}^{(1)}_{k'_0, :, k'_1 } \otimes \dotsc \otimes \mathbf{x}^{(l)}_{k'_{l-1}, :, j } \right)}{n_1, \dotsc, n_l}{~}\\
        &=  \sum_{k_0 =1}^{r_0} \cdots \sum_{k_{l-1}=1}^{r_{l-1}} \sum_{k'_0 =1}^{r_0} \cdots \sum_{k'_{l-1}=1}^{r_{l-1}} \prod_{\mu=1}^{l} \left( \mathbf{x}^{(\mu)}_{k_{\mu-1}, :, k_\mu } \right)^T \cdot \mathbf{x}^{(\mu)}_{k'_{\mu-1}, :, k'_\mu }, 
    \end{split}
\end{equation*}
with $k_l =i$ and $k'_l = j$. Since $\mathbf{x}^{(1)}$ is left-orthonormal and $r_0 =1$, we obtain $\left( \mathbf{x}^{(1)}_{1,:, k_1} \right)^T \cdot \mathbf{x}^{(1)}_{1,:, k'_1} = \delta_{k_1, k'_1}$. This implies that $\left( M^T \cdot M \right)_{i,j}$ is only nonzero if $k_1 = k'_1$. Now, we include the next core. This yields
\begin{equation*}
    \begin{split}
        & \sum_{k_1 = 1}^{r_1} \sum_{k'_1 = 1}^{r_1} \delta_{k_1, k'_1} \cdot \left( \mathbf{x}^{(2)}_{k_1,:, k_2} \right)^T \cdot \mathbf{x}^{(2)}_{k'_1,:, k'_2} \\ & \quad = \sum_{k_1 = 1}^{r_1} \left( \mathbf{x}^{(2)}_{k_1,:, k_2} \right)^T \cdot \mathbf{x}^{(2)}_{k_1,:, k'_2} = \left( \mat{\mathbf{x}^{(2)}_{:,:,k_2}}{r_1, n_1}{~}\right)^T \cdot \mat{\mathbf{x}^{(2)}_{:,:,k'_2}}{r_1, n_1}{~} = \delta_{k_2, k'_2}
    \end{split}
\end{equation*}
since $\mathbf{x}^{(2)}$ is also left-orthonormal. Successively, it then follows that, in order for $ \left( M^T \cdot M \right)_{i,j} $ to be nonzero, it must hold that $k_\mu = k'_\mu$ for $\mu = 2, \dotsc, l-1$. Thus, we obtain
\begin{equation*}
    \left( M^T \cdot M \right)_{i,j} = \sum_{k_{l-1}=1}^{r_{l-1}} \left( \mathbf{x}^{(l)}_{k_{l-1},:, i} \right)^T \cdot \mathbf{x}^{(l)}_{k_{l-1},:, j} = \left( \mat{\mathbf{x}^{(l)}_{:,:,i}}{r_{l-1}, n_l}{~}\right)^T \cdot \mat{\mathbf{x}^{(l)}_{:,:,j}}{r_{l-1}, n_l}{~}.
\end{equation*}
Note that due to the construction (see lines 3 \& 4 of Algorithm~\ref{alg:Pseudoinverse}) $\mathbf{x}^{(l)}$ is also left-orthonormal and therefore
\begin{equation*}
    M^T \cdot M = I \in \R^{r_l \times r_l}.
\end{equation*}
Analogously, it can be shown that $N \cdot N^T = I$ using the right-orthonormality of $\mathbf{x}^{(l+1)}, \dotsc, \mathbf{x}^{(d)}$. It follows that the pseudoinverse calculated by Algorithm~\ref{alg:Pseudoinverse} satisfies the necessary conditions, e.g.
\begin{align*}
        & \mat{\mathbf{x}}{n_1, \dotsc, n_l}{ n_{l+1}, \dotsc, n_d}  \cdot \left( \mat{\mathbf{x}}{n_1, \dotsc, n_l}{ n_{l+1}, \dotsc, n_d} \right)^+ \cdot \mat{\mathbf{x}}{n_1, \dotsc, n_l}{ n_{l+1}, \dotsc, n_d} \\
            & \quad = M \, \Sigma \, N \cdot N^T \, \Sigma^{-1} \, M^T \cdot M \, \Sigma \, N
            = M \, \Sigma \, N
            = \mat{\mathbf{x}}{n_1, \dotsc, n_l}{ n_{l+1}, \dotsc, n_d}. \qedhere
\end{align*}

\end{proof}

\begin{remark}
Algorithm~\ref{alg:Pseudoinverse} depicts just one possible way to compute the pseudoinverse of a given tensor. If the cores of the tensor train $\mathbf{x}$ are already left- or right-orthonormal, respectively, e.g.~the decomposition of $\mathbf{x}$ was computed by Algorithm~\ref{alg:TT-decomposition}, we can adapt Algorithm~\ref{alg:Pseudoinverse}. For instance, if all TT-cores (except the last one) are already left-orthonormal, we skip the application of Algorithm~\ref{alg:left-ortho} and only right-orthonormalize the cores $\mathbf{x}^{(d)}, \dotsc , \mathbf{x}^{(l+2)}$. Then the SVD can be applied to a matricization of $\mathbf{x}^{(l+1)}$ such that the matrix $V^T$ represents the updated version of $\mathbf{x}^{(l+1)}$ and $U$ is multiplied to the core $\mathbf{x}^{(l)}$ from the right. 
\end{remark}

Note that we do not need to compute $ M $ and $ N $ explicitly. Instead, we only execute the lines 1 to 6 of Algorithm~\ref{alg:Pseudoinverse} and then store the modified cores and the matrix $\Sigma$ containing the singular values. Thus, again with a slight abuse of the notation, we only store
\begin{equation*}
    \begin{split}
        \mathbf{x}^+ = \sum_{k_l = 1}^{r_l} \sigma^{-1}_{ k_l} & \cdot \left( \sum_{k_{l+1} =1}^{r_{l+1}} \cdots \sum_{k_{d-1}=1}^{r_{d-1}} \mathbf{x}^{(l+1)}_{k_l, :, k_{l+1} } \otimes \dotsc \otimes \mathbf{x}^{(d)}_{k_{d-1}, :, 1 }\right) \\
        & \cdot \left( \sum_{k_{1} =1}^{r_{1}} \cdots \sum_{k_{l-1}=1}^{r_{l-1}} \mathbf{x}^{(1)}_{1, :, k_{1} } \otimes \dotsc \otimes \mathbf{x}^{(l)}_{k_{l-1}, :, k_l }\right),
    \end{split}
\end{equation*}
which can be either regarded as the sum of $r_l$ tensor trains scaled by $\sigma^{-1}_{1}, \dotsc, \sigma^{-1}_{r_l} $ or as a cyclic tensor train as depicted in Figure \ref{fig:Pseudoinverse}. For a detailed description of the cyclic TT-format --~also called cyclic matrix product states~-- we refer to~\cite{Hac12}. Note that here again the cores $ 1, \dots, l $ and $ l+1, \dots, d $ are simply swapped as in Example~\ref{ex:rank-1 pseudoinverse}.

\subsection{Computation of DMD modes and eigenvalues in TT-format}

Let us now assume that the $ m $ snapshots are $ d $-dimensional arrays -- given for example by low-rank tensor representations -- of the form
\begin{equation}
     \mathbf{x}_i, \mathbf{y}_i \in \R^{n_1 \times \dots \times n_d},
\end{equation}
where $ \mathbf{y}_i = F(\mathbf{x}_i) $. These snapshots can be stored in the $(d+1)$-dimensional tensor trains $\mathbf{X}, \mathbf{Y} \in \R^{n_1 \times \dots \times n_d \times m} $, such that 

\begin{equation}
    \mathbf{X}_{:, \dotsc, :, i} = \mathbf{x}_i \quad \text{and} \quad \mathbf{Y}_{:, \dotsc, :, i} = \mathbf{y}_i,
\end{equation}
for $i=1, \dotsc, m$. Let $r_0,  \dotsc, r_{d+1}$ be the TT-ranks of $\mathbf{X}$ and $s_0,  \dotsc, s_{d+1}$ the TT-ranks of $\mathbf{Y}$. Now, let $X,Y \in \R^{n_1 \cdot \dotsc \cdot n_d \times m}$ be the specific matricizations of $\mathbf{X}$ and $\mathbf{Y}$, where we contract the dimensions $n_1, \dotsc, n_d$ such that every column of $X$ and $Y$, respectively, is the vectorization of the corresponding snapshot. We assume again that there is a linear relationship between the pairs of data vectors, i.e.
\begin{equation}
    Y = A X,
\end{equation}
with $A \in \R^{n_1 \cdot \dotsc \cdot n_d \times n_1 \cdot \dotsc \cdot n_d}$. We already stated in \eqref{eq:DMD A} that the linear operator $A$ can be computed by $Y \cdot X^+$. The pseudoinverse $X^+$ can be expressed -- after applying Algorithm~\ref{alg:Pseudoinverse} -- as
\begin{equation} \label{eq:representation of X^+}
    X^+ = \left( \mat{\mathbf{X}}{n_1, \dotsc, n_d }{m} \right)^+ = N^T \, \Sigma^{-1} \, M^T.
\end{equation}
Using similar matricizations, we can also represent the tensor unfolding $Y$ as a matrix product, i.e.
\begin{equation} \label{eq:representation of Y}
    \begin{split}
        Y &= \mat{\mathbf{Y}}{n_1, \dotsc, n_d }{m} \\
           &= \mat{\left(\sum_{l_{0} =1}^{s_{0}} \cdots \sum_{l_{d-1}=1}^{s_{d-1}} \mathbf{Y}^{(1)}_{l_0, :, l_{1} } \otimes \dotsc \otimes \mathbf{Y}^{(d)}_{l_{d-1}, :, : }\right) }{n_1, \dotsc, n_d}{s_{d}} \cdot \mat{\mathbf{Y}^{(d+1)}}{s_{d}}{m}
           = P \, Q.
    \end{split}
\end{equation}
Note that we do not require any special properties of the tensor cores of $\mathbf{Y}$. Left- and right-orthonormality must only hold for the TT-cores of $\mathbf{X}$. Combining the representations of $X^+$ and $Y$, we can express the matrix $ A $ as 
\begin{equation} \label{eq:full matrix in TT}
    A = Y \cdot X^+ =  P \, Q  \cdot  N^ T  \, \Sigma^{-1} \, M^T .
\end{equation}
As explained in Section~\ref{sec:Dynamic mode decomposition}, there are different algorithms to compute the eigenvalues and eigenvectors of $ A $. Instead of computing $ A $ explicitly, we are interested in the reduced matrix $\tilde{A} \in \R^{r_d \times r_d}$ as mentioned in Algorithm~\ref{alg:StandardDMD}. Rewriting the expression for $\tilde{A}$ using the decompositions given in \eqref{eq:representation of X^+} and \eqref{eq:representation of Y}, this results in
\begin{equation} \label{eq:reduced matrix in TT}
    \tilde{A} = M^T \cdot P \, Q \cdot N^ T  \, \Sigma^{-1}.
 \end{equation}
In order to compute $\tilde{A}$, we do not have to compute the matrices $M^T$ and $P$ explicitly. We bypass this computational cost by splitting \eqref{eq:reduced matrix in TT} into different parts. First, consider $M^T \cdot P$, any entry is given by
\begin{equation*}
    \begin{split}
        \left( M^T \cdot P \right)_{i,j} &= \left( \mat{\left( \sum_{k_0 =1}^{r_0} \cdots \sum_{k_{d-1}=1}^{r_{d-1}} \mathbf{X}^{(1)}_{k_0, :, k_1 } \otimes \dotsc \otimes \mathbf{X}^{(d)}_{k_{d-1}, :, i } \right)}{n_1, \dotsc, n_d}{~} \right)^T\\
        & \qquad \cdot  \mat{\left(\sum_{l_{0} =1}^{s_{0}} \cdots \sum_{l_{d-1}=1}^{s_{d-1}} \mathbf{Y}^{(1)}_{l_0, :, l_{1} } \otimes \dotsc \otimes \mathbf{Y}^{(d)}_{l_{d-1}, :, j }\right) }{n_1, \dotsc, n_d}{~}.
    \end{split}
\end{equation*}
It follows from the linearity of matricizations and Lemma~\ref{lem:Tensor product properties} that
\begin{equation*}
    \left( M^T \cdot P \right)_{i,j} = \sum_{\mathclap{k_0 =1}}^{r_0} \cdots \sum_{k_{d-1}=1}^{r_{d-1}} \sum_{l_{0}=1}^{s_{0}} \cdots \sum_{\mathclap{l_{d-1}=1}}^{s_{d-1}} \left( \mathbf{X}^{(1)}_{k_0, :, k_1 } \right)^T \mathbf{Y}^ {(1)}_{l_0, :, l_1} \cdot \dotsc \cdot \left( \mathbf{X}^{(d)}_{k_{d-1}, :, i } \right)^T \mathbf{Y}^ {(d)}_{l_{d-1}, :, j}.
\end{equation*}
Thus, by defining the matrices $\Theta_{\mu} \in \R^{r_{\mu-1} \cdot s_{\mu-1} \times r_\mu \cdot s_\mu}$ as
\begin{equation} \label{eq:Theta 1}
    \left( \Theta_{\mu} \right)_{\overline{k_{\mu-1}, l_{\mu-1}}, \overline{k_\mu, l_\mu}} = \left( \mathbf{X}^{(\mu)}_{k_{\mu-1}, :, k_\mu } \right)^T \cdot \mathbf{Y}^ {(\mu)}_{l_{\mu-1}, :, l_\mu},
\end{equation} 
for $\mu = 1, \dotsc, d$, we can write any entry of $M^T \cdot P$ as
\begin{equation} \label{eq:Theta 2}
    \left( M^T \cdot P \right)^T_{i,j} = \Theta_1 \cdot \Theta_2 \cdot \dotsc \cdot \Theta_{d-1} \cdot \left( \Theta_d \right)_{:, \overline{i,j}}.
\end{equation}
In this way, we can compute $M^T \cdot P$ without leaving the TT-format, we only have to reshape certain contractions of the TT-cores as depicted in \eqref{eq:Theta 1} and \eqref{eq:Theta 2}. This computation can be implemented efficiently using Algorithm 4 from \cite{Ose11}. The result is then a low-dimensional matrix with $r_d$ rows and $s_d$ columns, assuming that the TT-ranks of $\mathbf{X}$ and $\mathbf{Y}$ are small compared to the whole state space of these tensors. Indeed, the tensor ranks $r_d$ and $s_d$ are both bounded by the number of snapshots $m$ due to the right-orthonormalization of the last TT-cores. For the second term of~\eqref{eq:reduced matrix in TT}, $Q \cdot N^T$, we simply obtain
\begin{equation*}
    Q \cdot N^ T = \mat{\mathbf{Y}^{(d+1)}}{s_{d}}{m} \cdot \left( \mat{\mathbf{X}^{(d+1)}}{r_{d}}{m} \right)^T.
\end{equation*}
Subsequently, we only have to multiply the three low-dimensional matrices $\left(M^T \cdot P \right)$, $\left( Q \cdot N^T \right)$ and $\Sigma^{-1}$. The latter is just a diagonal matrix containing the reciprocals of the singular values occurring in $\Sigma$. Overall, we do not need to convert any tensor products of cores of $\mathbf{X}$ or $\mathbf{Y}$, respectively, into full tensors during our calculations. The results are naturally low-dimensional matrices and the reduced matrix $\tilde{A}$ can finally be used to compute the eigenvalues of the high-dimensional matrix $ A $ since both have the same spectrum. 

In order to calculate the corresponding DMD modes of $ A $, consider again Algorithms~\ref{alg:StandardDMD} and \ref{alg:ExactDMD}. If $ \lambda_1, \dotsc, \lambda_\nu $ are the eigenvalues of $ \tilde{A} $ corresponding to the eigenvectors $ w_1, \dotsc, w_\nu \in \R^{r_d} $, then the DMD modes $ \varphi_1, \dotsc, \varphi_\nu \in \R^{n_1 \cdot \dotsc \cdot n_d} $ of $ A $ according to the standard algorithm are given by 
\begin{equation} \label{eq:ModesStandard}
    \varphi_\mu = M \cdot w_\mu, 
\end{equation}
for $\mu=1, \dotsc, \nu$. At this point, we again benefit from using the TT-representations of $ M $. What \eqref{eq:ModesStandard} tells us is actually just the replacement of the last TT-core. This can be seen by defining $W \in \R^{r_d \times \nu}$ as
\begin{equation}
    W =
    \begin{bmatrix}
        w_1 & w_2 & \cdots & w_\nu
    \end{bmatrix}
\end{equation}
and writing
\begin{equation} \label{eq:DMDmodesStandard}
    M \cdot W = \mat{\left( \sum_{k_0 =1}^{r_0} \cdots \sum_{k_{d-1}=1}^{r_{d-1}} \mathbf{X}^{(1)}_{k_0, :, k_1 } \otimes \dotsc \otimes \mathbf{X}^{(d)}_{k_{d-1}, :, : } \right)}{n_1, \dotsc, n_d}{r_{d}} \cdot W.
\end{equation}
As a result, we can also express the DMD modes in a TT-representation, i.e.
\begin{equation}
\Phi = \sum_{k_0 =1}^{r_0} \cdots \sum_{k_{d}=1}^{r_{d}} \mathbf{X}^{(1)}_{k_0, :, k_1 } \otimes \dotsc \otimes \mathbf{X}^{(d)}_{k_{d-1}, :, k_d } \otimes W_{k_d, :},
\end{equation}
with $\mat{\Phi_{:, \dotsc, :, \mu}}{n_1, \dotsc, n_d}{~} = \varphi_\mu$ for $\mu = 1, \dotsc, \nu$. Considering the exact DMD algorithm, the DMD modes are given by
\begin{equation} \label{eq:ModesExact}
    \varphi_\mu = \frac{1}{\lambda} \cdot P \, Q \cdot N^T \, \Sigma^{-1} \cdot w_\mu, 
\end{equation}
for $\mu = 1, \dotsc, \nu$. The tensor train $\Phi$ representing all DMD modes is then given by
\begin{equation}
    \Phi = \sum_{k_0 =1}^{s_0} \cdots \sum_{k_{d}=1}^{s_{d}} \mathbf{Y}^{(1)}_{k_0, :, k_1 } \otimes \dotsc \otimes \mathbf{Y}^{(d)}_{k_{d-1}, :, k_d } \otimes {\underbrace{\left(Q \cdot N^T \, \Sigma^{-1} \cdot W \cdot \Lambda^{-1} \right) }_{\in \R^{s_d \times \nu}}} \,^{\phantom{(d)}}_{k_d, :},
\end{equation}
again with $\mat{\Phi_{:, \dotsc, :, \mu}}{n_1, \dotsc, n_d}{~} = \varphi_\mu$ for $\mu = 1, \dotsc, \nu$ and 
\begin{equation}
    \Lambda =
    \begin{pmatrix}
        \lambda_1 & & 0\\
        & \ddots & \\
        0 & & \lambda_\nu
    \end{pmatrix}.
\end{equation}
Summing up, we can express the DMD modes using a previously given tensor train, modifying just the last core. For the standard DMD algorithm, we can express $\Phi$ using the first $d$ cores of $\mathbf{X}$ and replacing the core $\mathbf{X}^{(d+1)}$ by $W$. As for the exact DMD algorithm, $\Phi$ is represented by the first $d$ cores of $\mathbf{Y}$, replacing $\mathbf{Y}^{(d+1)}$ by $Q \, N^T \, \Sigma^{-1} \, W \, \Lambda^{-1} $. In both cases, we benefit from not leaving the TT-representations of $ \mathbf{X} $ and $\mathbf{Y}$, respectively.

\section{Numerical results}
\label{sec:Numerical results}

The following examples are mainly for illustration purposes, we will not describe the underlying mathematical models in full detail since the governing equations are not relevant here. Instead of analyzing simulation data, we could also process experimental measurement data. The goal is to detect the dominant dynamics of a dynamical system given only data. The first example, a simulation of two merging vortices, is two-dimensional and has been created with the \textsc{Multimod} toolbox~\cite{Rou13}. The second example is a three-dimensional simulation of the flow around a blunt body governed by the incompressible Navier--Stokes equations and has a significantly higher number of degrees of freedom. 

The DMD experiments using the TT-format were performed on a Linux machine with 128 GB RAM and an Intel Xeon processor with a clock speed of 3 GHz and 8 cores. The algorithms were implemented in MATLAB R2015a using a compound of cell arrays and multidimensional matrices for tensors in the TT-format.

\paragraph{Two merging vortices.}

The first example shows two merging vortices. Here, the domain is discretized using an $ n \times n $ grid with $ n = 200, 400, \dots, 1400 $. We generated data for $ 447 $ equidistant time steps, thus~$ \mathbf{X}, \mathbf{Y} \in \R^{n \times n \times 446} $. Intermediate solutions are shown in Figure~\ref{fig:Vortex}a, the corresponding DMD modes in Figure~\ref{fig:Vortex}b. We computed tensor representations of $X$ and $Y$ using Algorithm~\ref{alg:TT-decomposition} in order to compare the results for different thresholds $\varepsilon \geq 0$. Nevertheless, we assume that the data matrices $ \mathbf{X}, \mathbf{Y} $ are already given in the TT-format for the tensor-based formulation (the runtimes of Algorithm~\ref{alg:TT-decomposition} are thus not included in the overall runtimes), i.e.~the partial differential equation is directly solved using tensor representations. For example, that could mean that the numerical solution of the partial differential equation is obtained by applying an appropriate time-stepping scheme combined with the Alternating Linear Scheme (ALS)~\cite{HRS12}. In this case, we can execute the half-sweeps of the ALS in a way that the resulting tensor is already left-orthonormal. Thus, all relevant calculations to compute the DMD modes are included in the runtimes presented in Figure~\ref{fig:VortexRuntimes}. We compare the exact DMD implementations, i.e.~Algorithm~\ref{alg:ExactDMD} and its tensor-based counterpart.

\begin{figure}[htbp]
    \centering
    \subfiguretitle{a)}
    \includegraphics[width=0.95\textwidth]{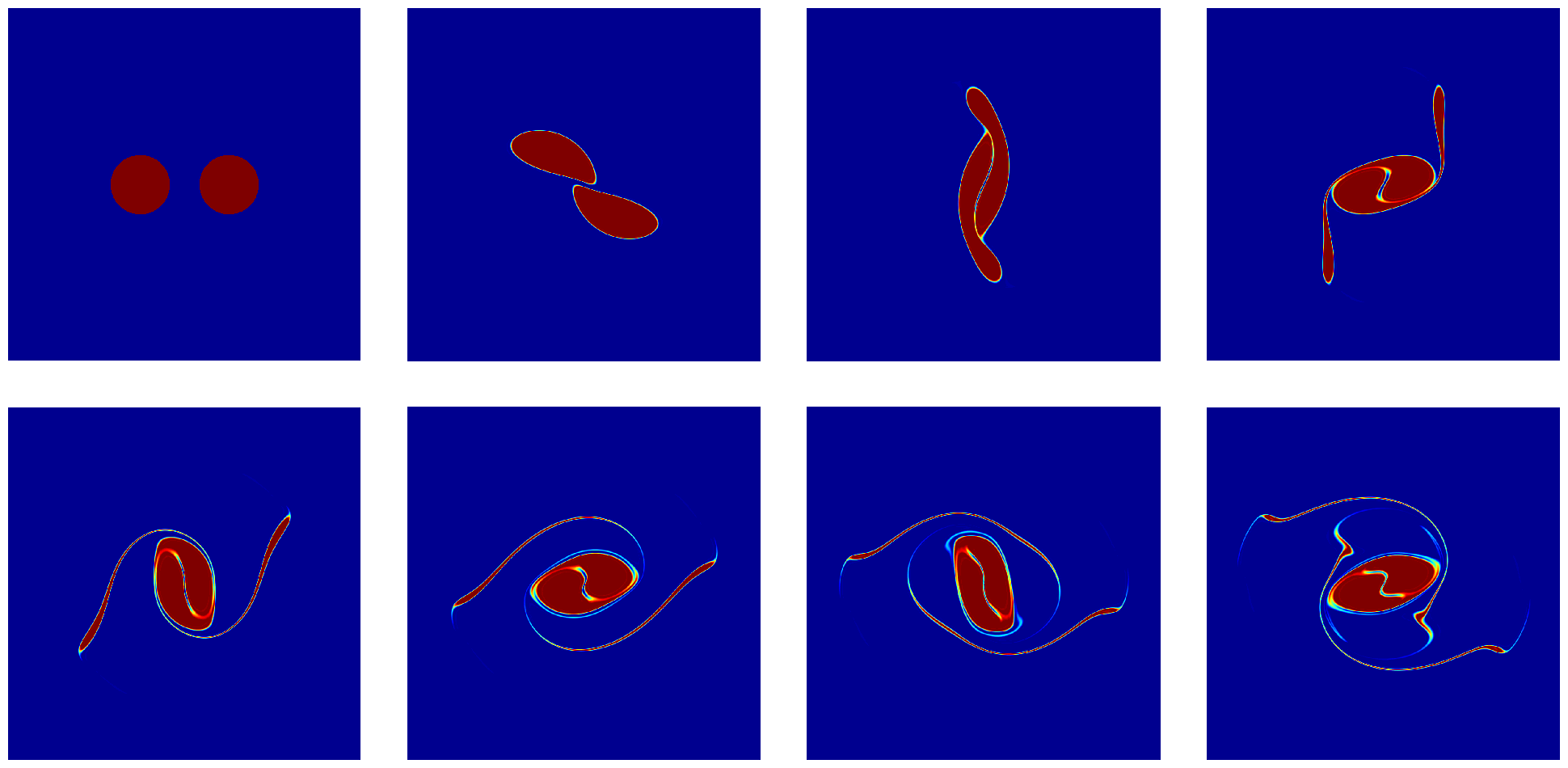} \\
    \subfiguretitle{b)}
    \includegraphics[width=0.95\textwidth]{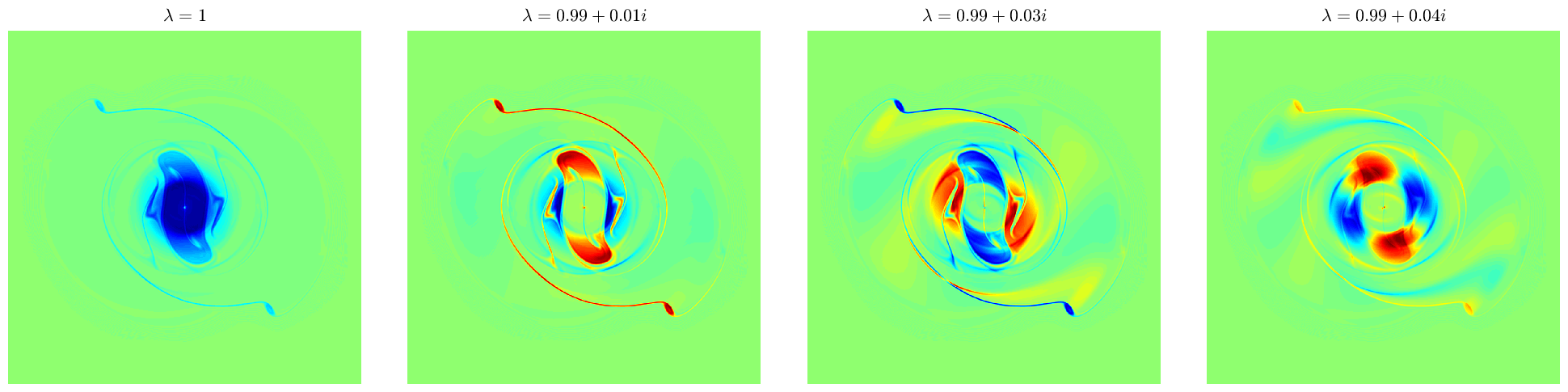}
    \caption{a) Simulation of two merging vortices. b) DMD modes corresponding to eigenvalues close to 1.}
    \label{fig:Vortex}
\end{figure}

The DMD modes shown in Figure~\ref{fig:Vortex}b are visually indistinguishable for the different thresholds $ \varepsilon $ and the resulting tensor ranks. The influence of the low-rank approximation on the numerical errors is shown in Table~\ref{tab:VortexErrors}, which contains the relative errors defined by $ e_\lambda = \lvert \lambda - \tilde{\lambda} \rvert /\abs{\lambda} $ and $ e_\varphi = \norm{\varphi - \tilde{\varphi}}_F/\norm{\varphi}_F $, where $ \lambda $ and $ \varphi $ are the DMD eigenvalue and mode for $ \varepsilon = 0 $ and $ \tilde{\lambda} $ and $ \tilde{\varphi} $ the corresponding approximations for a given $ \varepsilon > 0 $. Here, we normalized each mode in such a way that the largest absolute value is $ 1 $. For $ n = 1400 $, the speedup is approximately $ 7 $ for $ \varepsilon = 0 $ and $ 16 $ for $ \varepsilon = 1\mathrm{e}{-10} $. The ranks of the corresponding tensor trains $ \mathbf{X} $ are $ r = [1, 1400, 446, 1] $ and $ r = [1, 732, 446, 1] $, respectively. That is, approximately half of the singular values between the first and second core are less than $ \varepsilon = 1\mathrm{e}{-10} $. This illustrates that a given tensor representation of the data can be exploited to efficiently compute the DMD modes and eigenvalues without converting the data set to the full format since the tensor-train format already contains information about the required pseudoinverse. Furthermore, the results show that the lower the rank of the tensor approximation -- which depends on the parameter $ \varepsilon $ -- the higher the speedup.

\begin{figure}[htbp]
    \centering
    \includegraphics[width=0.8\textwidth]{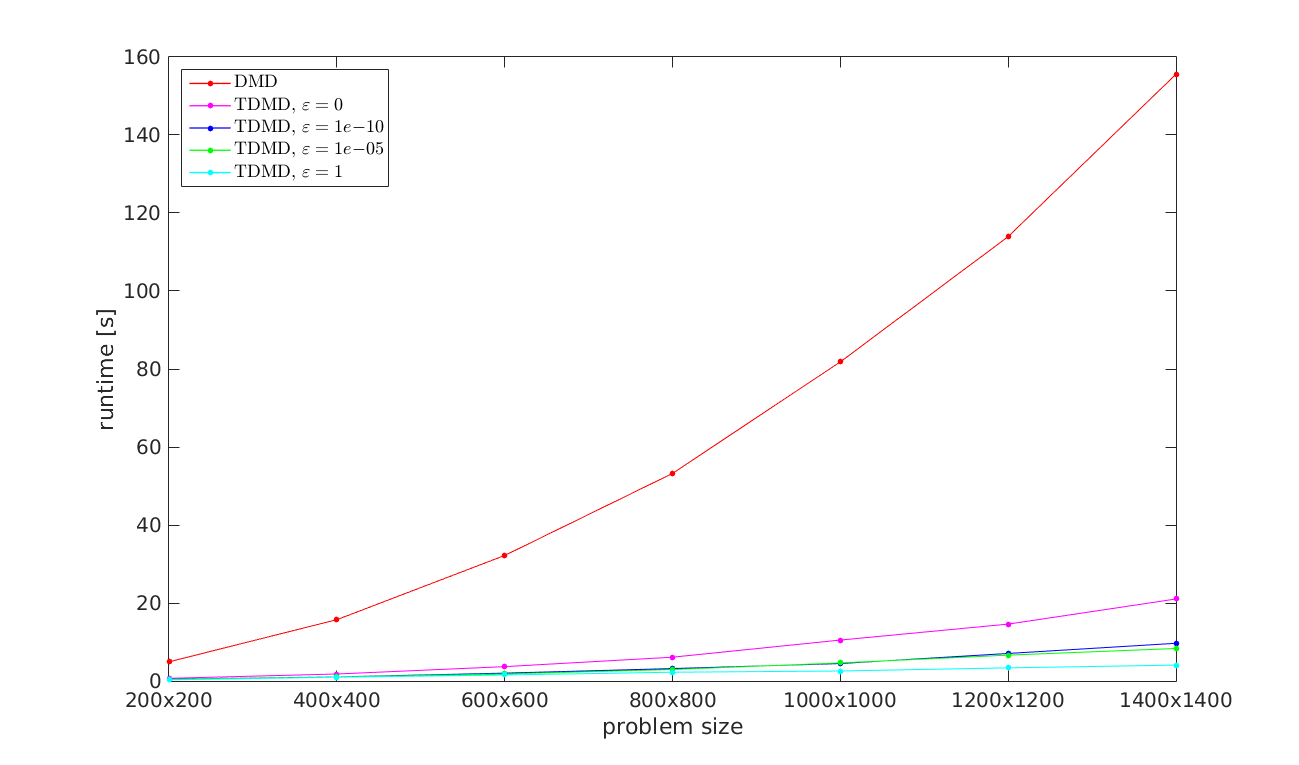}
    \caption{Comparison of the runtimes of DMD and tensor-based DMD applied to the vortex merging example for different values of $ \varepsilon $ and different problem sizes.}
    \label{fig:VortexRuntimes}
\end{figure}

\begin{table}
    \centering
    \caption{Influence of the truncation on the accuracy of the leading DMD modes for the $ 1400 \times 1400 $ grid discretization of the vortex merging example. The entries show the relative errors $ e_\lambda $ and $ e_\varphi $ for different values of $ \varepsilon $.}
    \label{tab:VortexErrors}
    \newcommand{\md}[2]{\multicolumn{#1}{c|}{#2}}
    \newcommand{\me}[2]{\multicolumn{#1}{c}{#2}}
    \scalebox{0.75}{
    \begin{tabular}{|l|rr|rr|rr|rr|}
        \hline
                                        & \md{2}{Mode 1} & \md{2}{Mode 2} & \md{2}{Mode 3} & \md{2}{Mode 4} \\
                                        & \me{1}{$ e_\lambda $} & \md{1}{$ e_\varphi $} & \me{1}{$ e_\lambda $} & \md{1}{$ e_\varphi $} & \me{1}{$ e_\lambda $} & \md{1}{$ e_\varphi $} & \me{1}{$ e_\lambda $} & \md{1}{$ e_\varphi $} \\ \hline
        $\varepsilon=1\mathrm{e}{-10} $ & 5.55e$-$15 & 5.92e$-$14 & 5.31e$-$15 & 3.32e$-$13 & 9.75e$-$15 & 3.50e$-$13 & 1.37e$-$14 & 3.28e$-$13 \\
        $\varepsilon=1\mathrm{e}{-05} $ & 1.47e$-$14 & 2.23e$-$10 & 5.09e$-$15 & 4.27e$-$10 & 4.93e$-$15 & 9.73e$-$10 & 1.39e$-$14 & 7.77e$-$10 \\
        $\varepsilon=1                $ & 6.10e$-$08 & 5.22e$-$04 & 2.90e$-$06 & 3.16e$-$03 & 3.02e$-$06 & 2.62e$-$03 & 4.00e$-$06 & 7.86e$-$04 \\ \hline
    \end{tabular}}
\end{table}

\paragraph{Flow around blunt body.}

As a second example, we consider the flow around a blunt body governed by the three-dimensional incompressible Navier-Stokes equations. Similar experiments have been described in~\cite{RMBSH09}. Here, the domain $\Omega \subset \mathbb{R}^3$ has a size of $L = (25, 15, 10)$. A conical object is placed inside the domain with the center axis at $(x_1, x_2) = (5, 7.5)$ and a diameter of $D_1 = 0.8$ at the boundaries and $D_2 = 1.6$ in the middle of the channel (cf.~Figure~\ref{fig:Cylinder}). We set the Reynolds number $Re = \overline{U} D/\nu = 240$, based on the inflow velocity $ \overline{U} = (1, 0, 0) $, the kinematic viscosity $ \nu = 5\cdot10^{-3} $, and the average cone diameter $ D = 1.2 $. Moreover, we apply periodic boundary conditions in the $ x_2 $ as well as the $ x_3 $ direction. The domain is discretized by a rectangular grid with approximately $ 10^6 $ degrees of freedom and the computations are performed using OpenFOAM \cite{JJT07}. Before applying the tensor-based DMD algorithm, $ 1001 $ snapshots are interpolated on an equidistant, rectangular grid of dimension $ 150 \times 85 \times 80 $, where the values for all grid points inside the object are set to zero.

For the DMD analysis, we consider the velocity magnitude $ \mathbf{U} = \sqrt{U_1^2 + U_2^2 + U_3^2} $, thus~$ \mathbf{X}, \mathbf{Y} \in \R^{150 \times 85 \times 80 \times 1000} $. The resulting ranks, runtimes, and relative errors for different values of $ \varepsilon $ are shown in Table~\ref{tab:BluntBodyData}. By increasing $ \varepsilon $, the initially high ranks can be reduced without a huge loss of accuracy. Here, we compared only the first two eigenmodes shown in Figure~\ref{fig:Cylinder_DMD} with the DMD modes for $ \varepsilon = 0 $. The runtime of conventional DMD for this problem is approximately 125\,s. The results show that the efficiency of DMD can be improved significantly using tensor decompositions, provided that the ranks are reasonably low. The aim is thus to directly compute low-rank solutions for such problems.

\begin{table}
    \centering
    \caption{Influence of the truncation on the ranks, runtimes, and accuracy of the leading DMD modes for the blunt body problem.}
    \label{tab:BluntBodyData}
    \newcommand{\md}[2]{\multicolumn{#1}{c|}{#2}}
    \newcommand{\me}[2]{\multicolumn{#1}{c}{#2}}
    \scalebox{0.75}{
    \begin{tabular}{|l|r|r|rr|rr|}
        \hline
                                        &         \md{1}{Ranks}              & \md{1}{Runtime} & \md{2}{Mode 1} & \md{2}{Mode 2}  \\
                                        &                                    &                 & \me{1}{$ e_\lambda $} & \md{1}{$ e_\varphi $} & \me{1}{$ e_\lambda $} & \md{1}{$ e_\varphi $} \\ \hline
        $\varepsilon=0    $ & [1,   150,  6083,  1000,               1] & 134\,s & 0          & 0          & 0          & 0          \\
        $\varepsilon=0.01 $ & [1,   150,  4708,  1000,               1] & 102\,s & 6.29e$-$07 & 6.33e$-$04 & 1.53e$-$07 & 2.88e$-$04 \\
        $\varepsilon=0.05 $ & [1,   150,  3649,  \phantom{0}641,     1] &  52\,s & 1.11e$-$04 & 3.64e$-$02 & 3.05e$-$05 & 2.37e$-$02 \\
        $\varepsilon=0.1  $ & [1,   148,  3003,  \phantom{0}527,     1] &  35\,s & 1.38e$-$04 & 6.92e$-$02 & 1.26e$-$04 & 3.65e$-$02 \\
        $\varepsilon=0.5  $ & [1,   135,  1624,  \phantom{0}343,     1] &  14\,s & 2.62e$-$04 & 8.69e$-$02 & 9.61e$-$05 & 5.55e$-$02 \\
        $\varepsilon=1    $ & [1,   130,  1199,  \phantom{0}278,     1] &   8\,s & 4.34e$-$04 & 1.72e$-$01 & 1.80e$-$04 & 1.06e$-$01 \\ \hline
    \end{tabular}}
\end{table}

\begin{figure}[htb]
    \centering
    \includegraphics[width=0.95\textwidth]{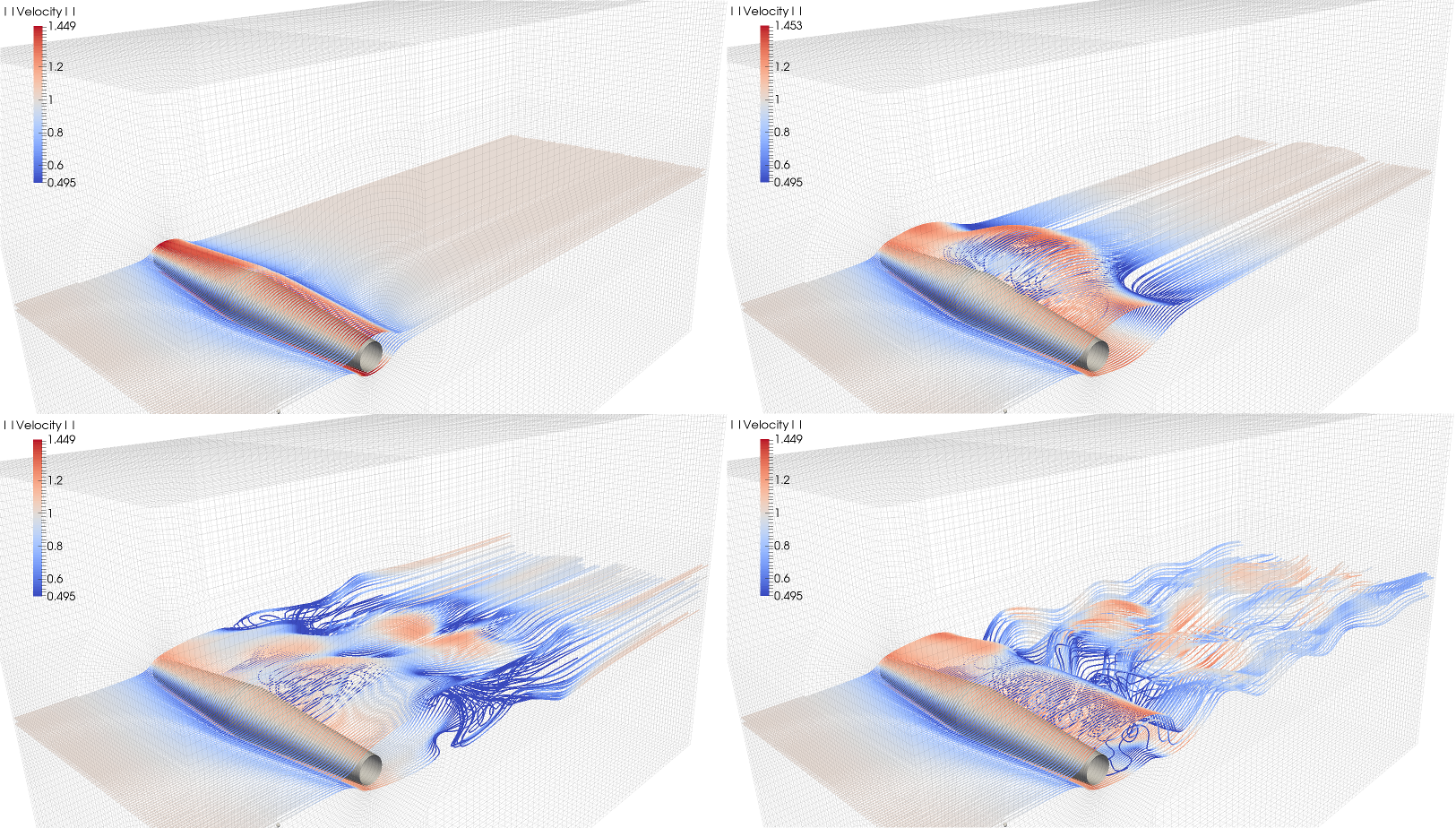}
    \caption{Simulation of the flow around a blunt body. The flow is visualized by streamlines which are inserted at the inflow and slightly above and below the cone axis, respectively. The streamlines are colored according to the velocity magnitude.}
    \label{fig:Cylinder}
\end{figure}

\begin{figure}[p]
    \centering
    \subfiguretitle{a) $\lambda = 0.998 + 0.049i$; $\omega = 0.4906$ \hspace{4.5cm} b) $\lambda = 0.997 + 0.075i$; $\omega = 0.7508$}
    \includegraphics[width=0.475\textwidth]{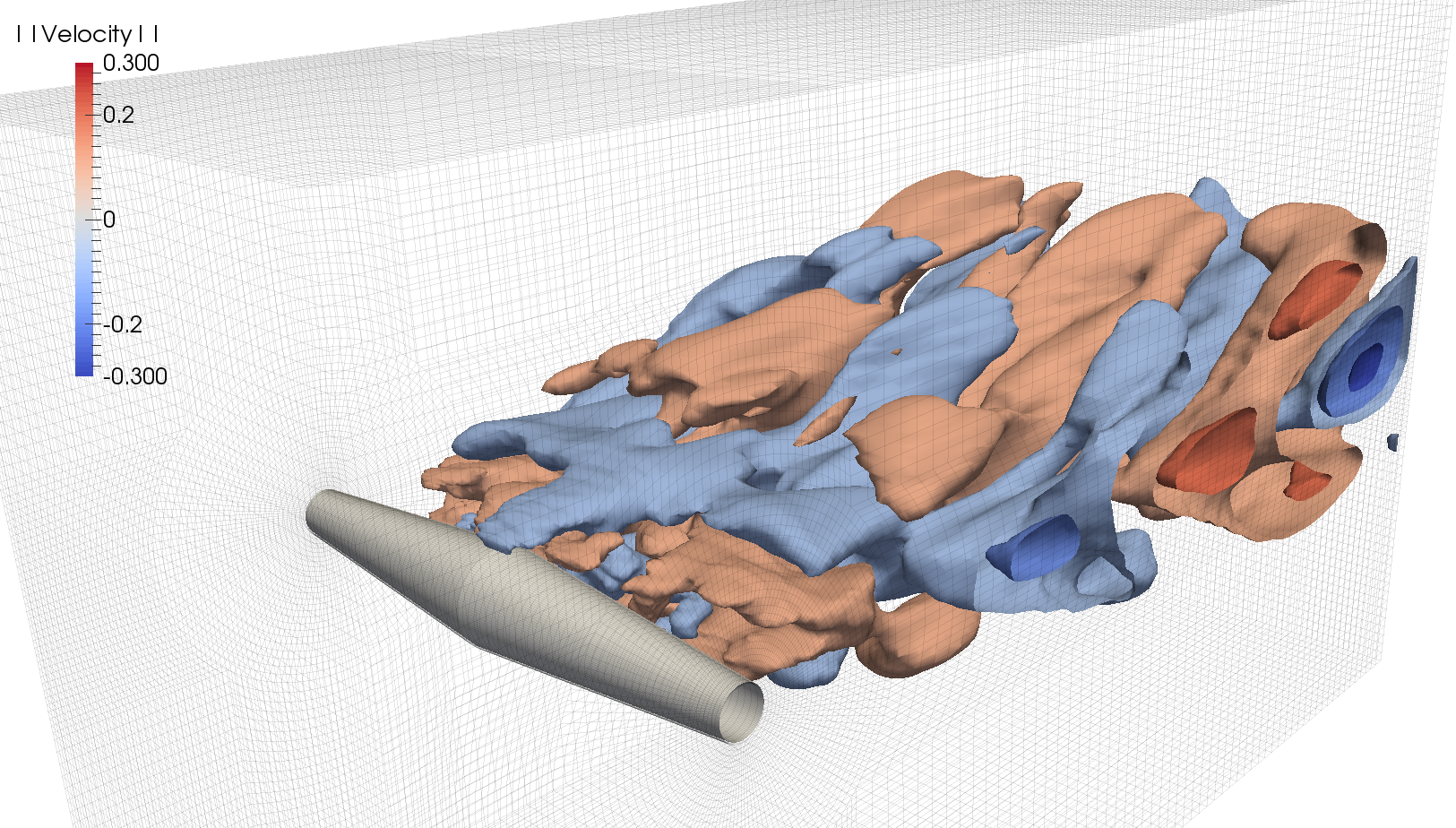}
    \includegraphics[width=0.475\textwidth]{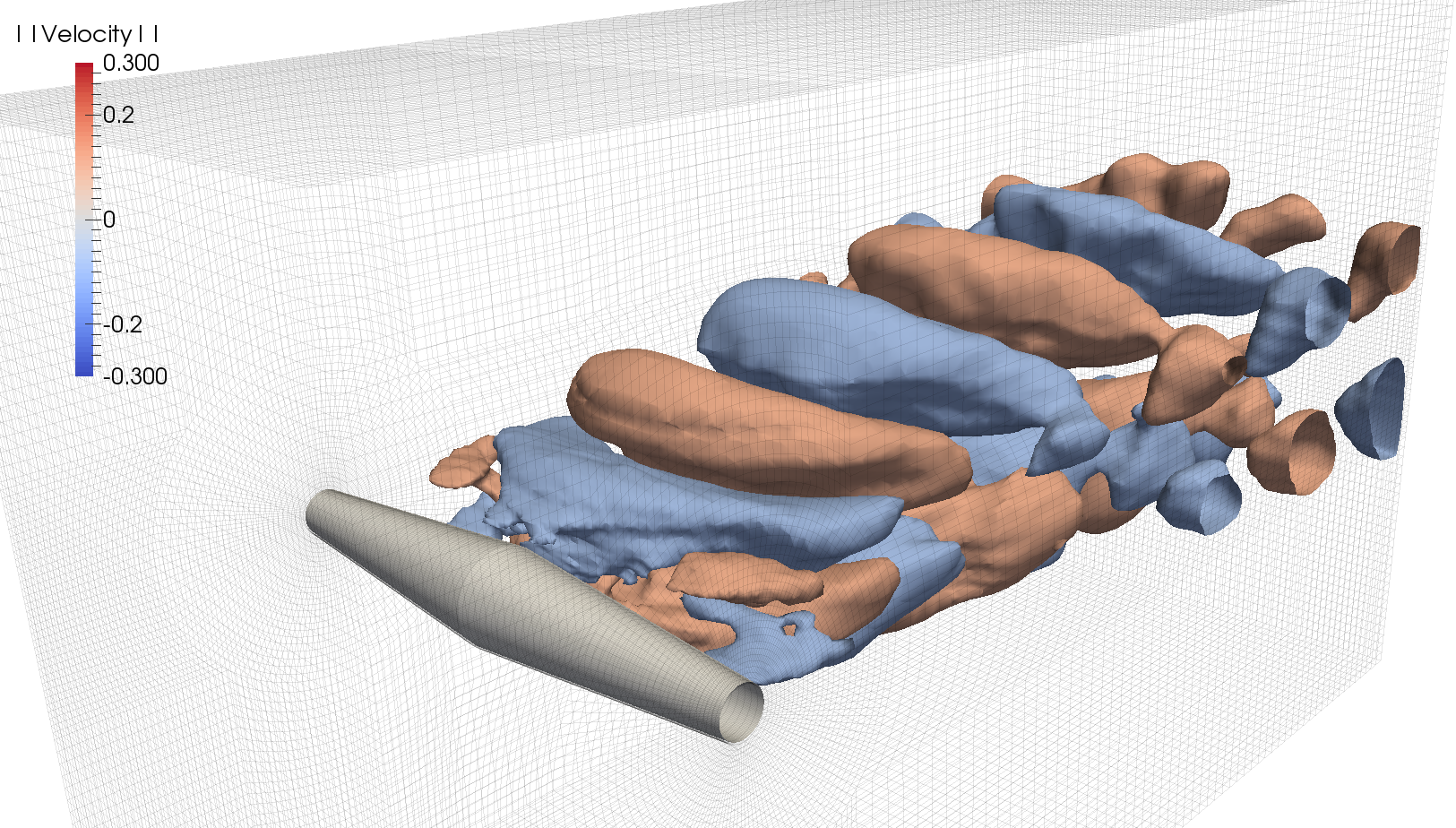}\\
    \subfiguretitle{c) $\lambda = 0.992 + 0.117i$; $\omega = 1.1740$ \hspace{4.5cm} d) $\lambda = 0.992 + 0.123i$; $\omega = 1.2336$}
    \includegraphics[width=0.475\textwidth]{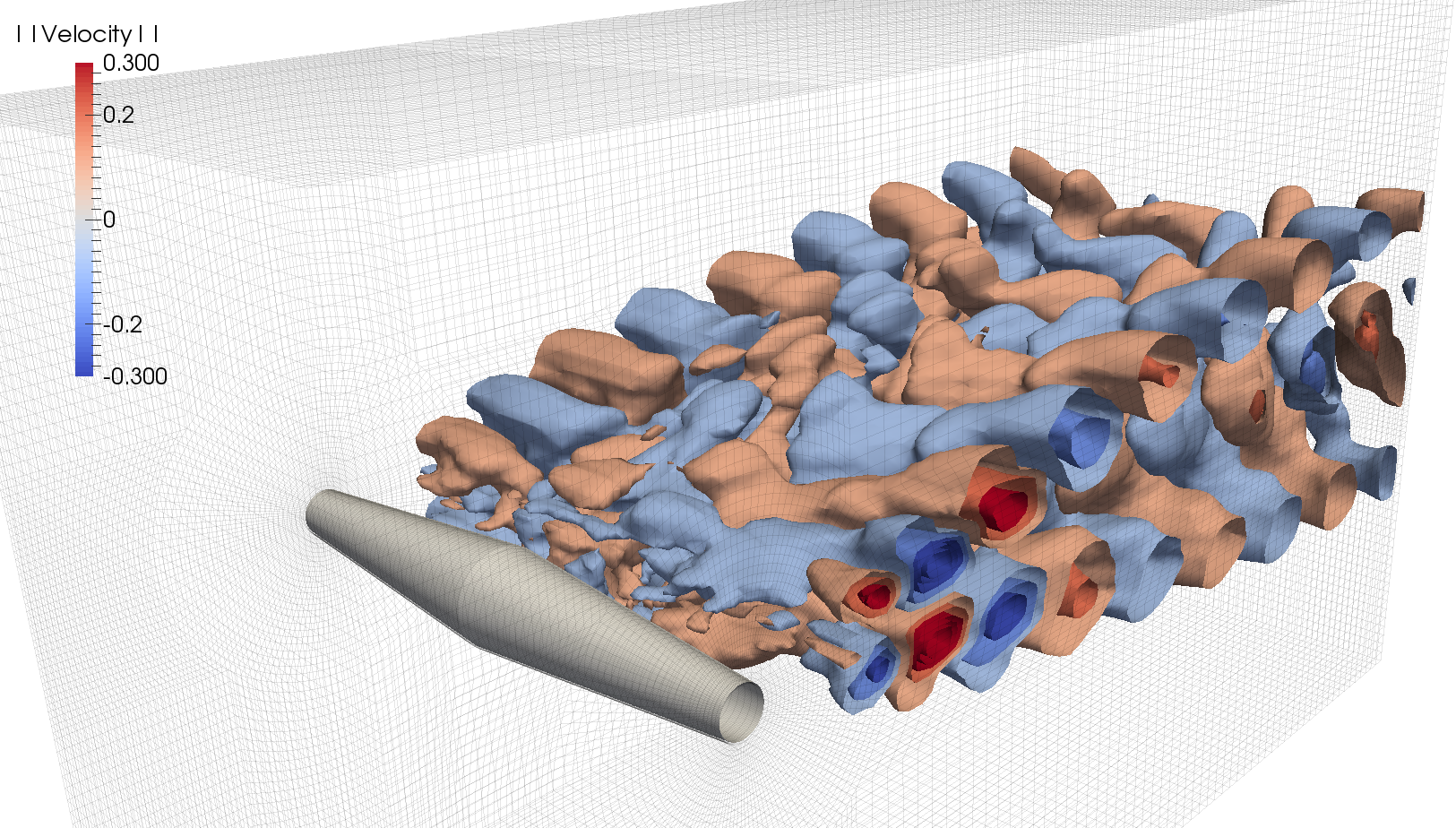}
    \includegraphics[width=0.475\textwidth]{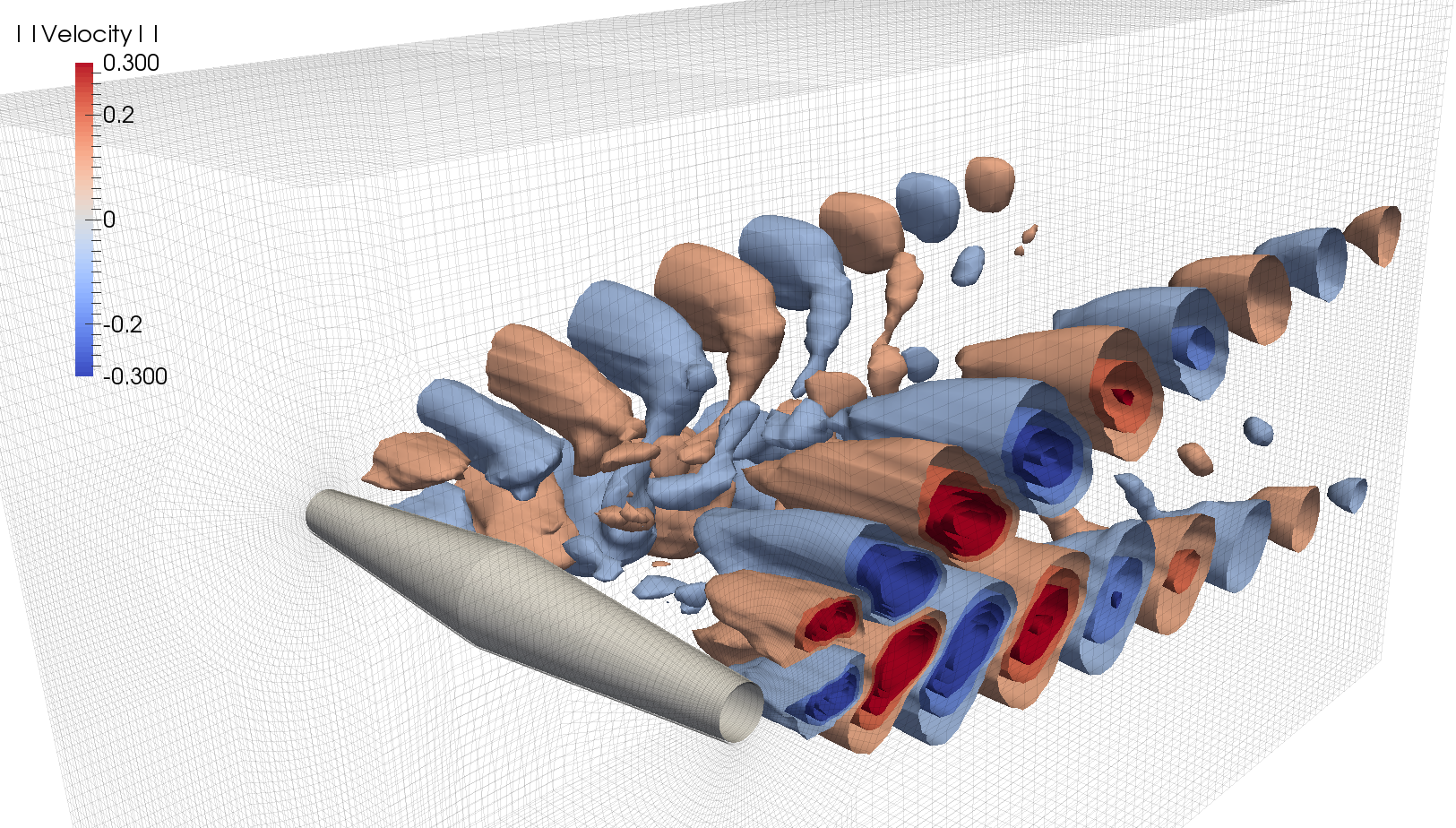}\\
    \subfiguretitle{e) $\lambda = 0.991 + 0.131i$; $\omega = 1.3143$ \hspace{4.5cm} f) $\lambda = 0.979 + 0.197i$; $\omega = 1.9857$}
    \includegraphics[width=0.475\textwidth]{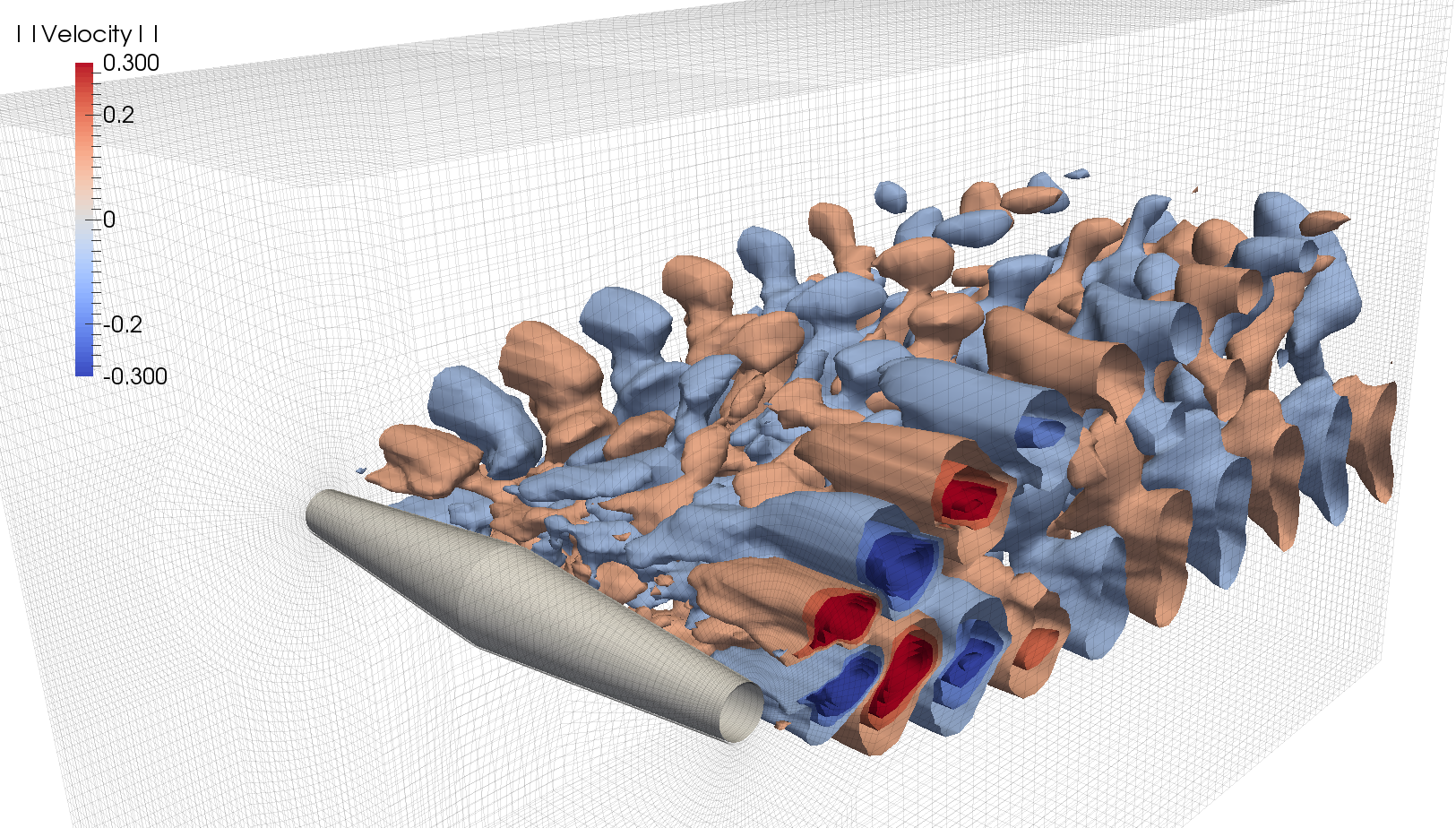}
    \includegraphics[width=0.475\textwidth]{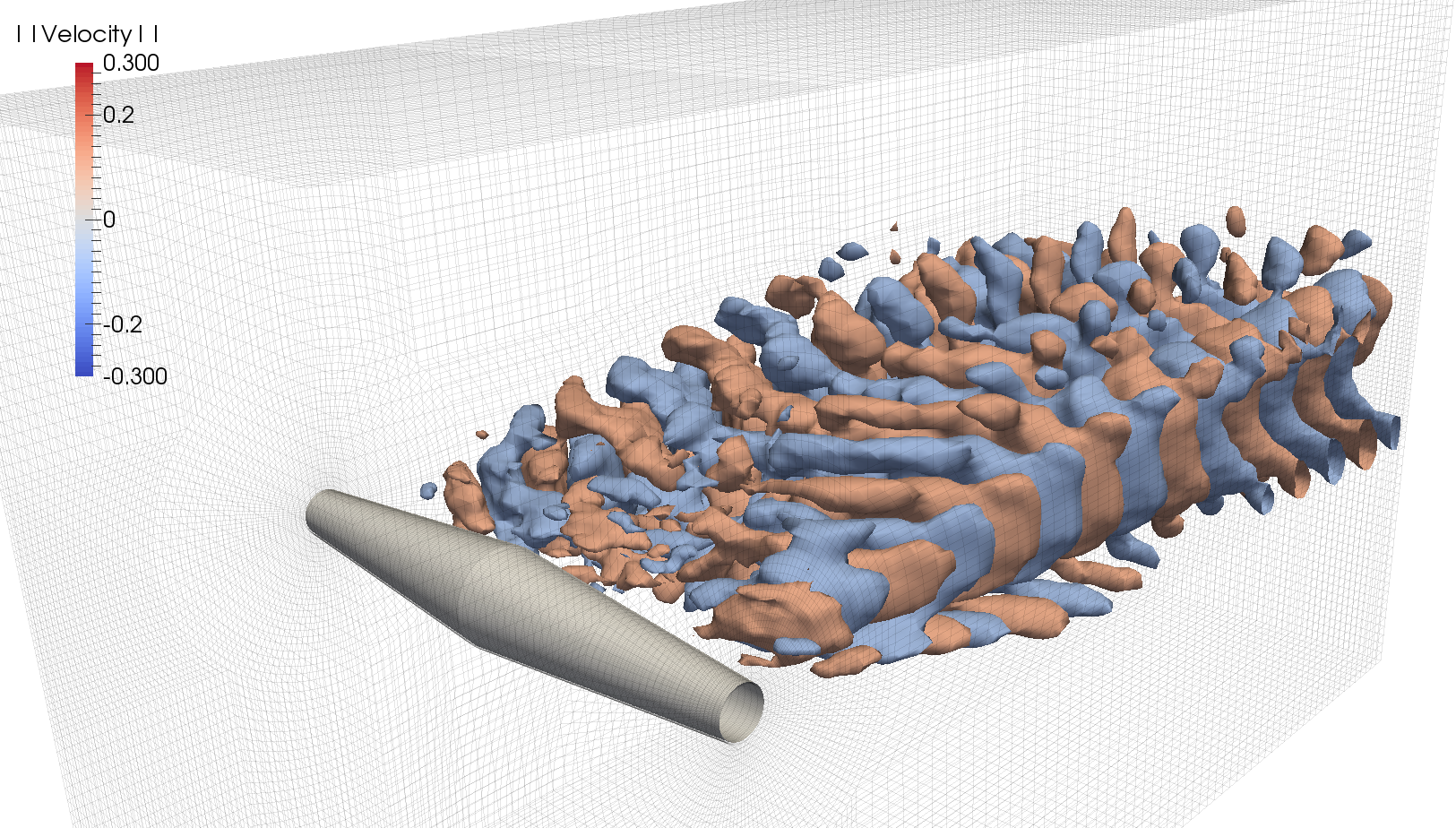}\\
    \subfiguretitle{g) $\lambda = 0.969 + 0.244i$; $\omega = 2.4668$ \hspace{4.5cm} h) $\lambda = 0.929 + 0.369i$; $\omega = 3.7809$}
    \includegraphics[width=0.475\textwidth]{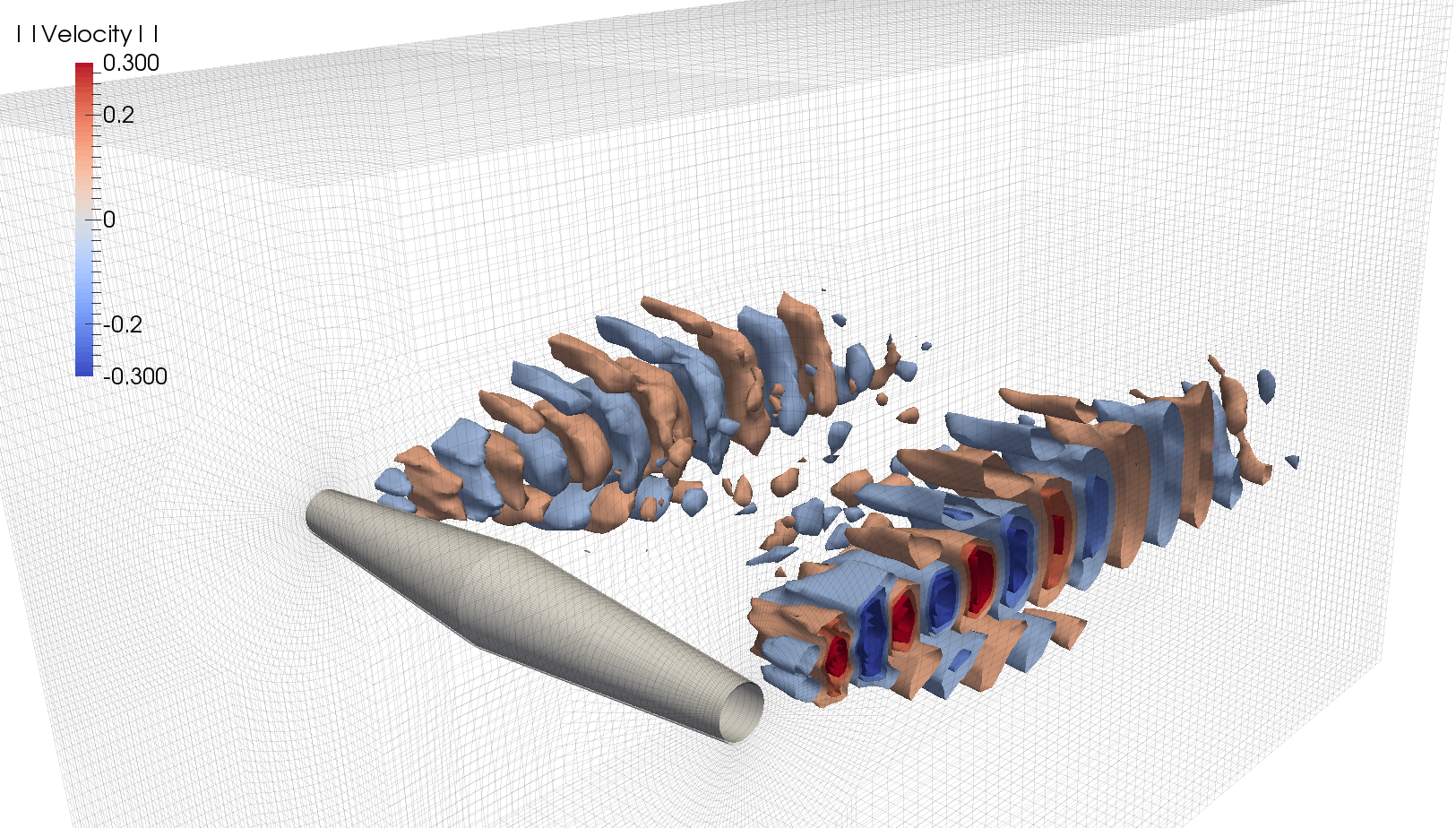}
    \includegraphics[width=0.475\textwidth]{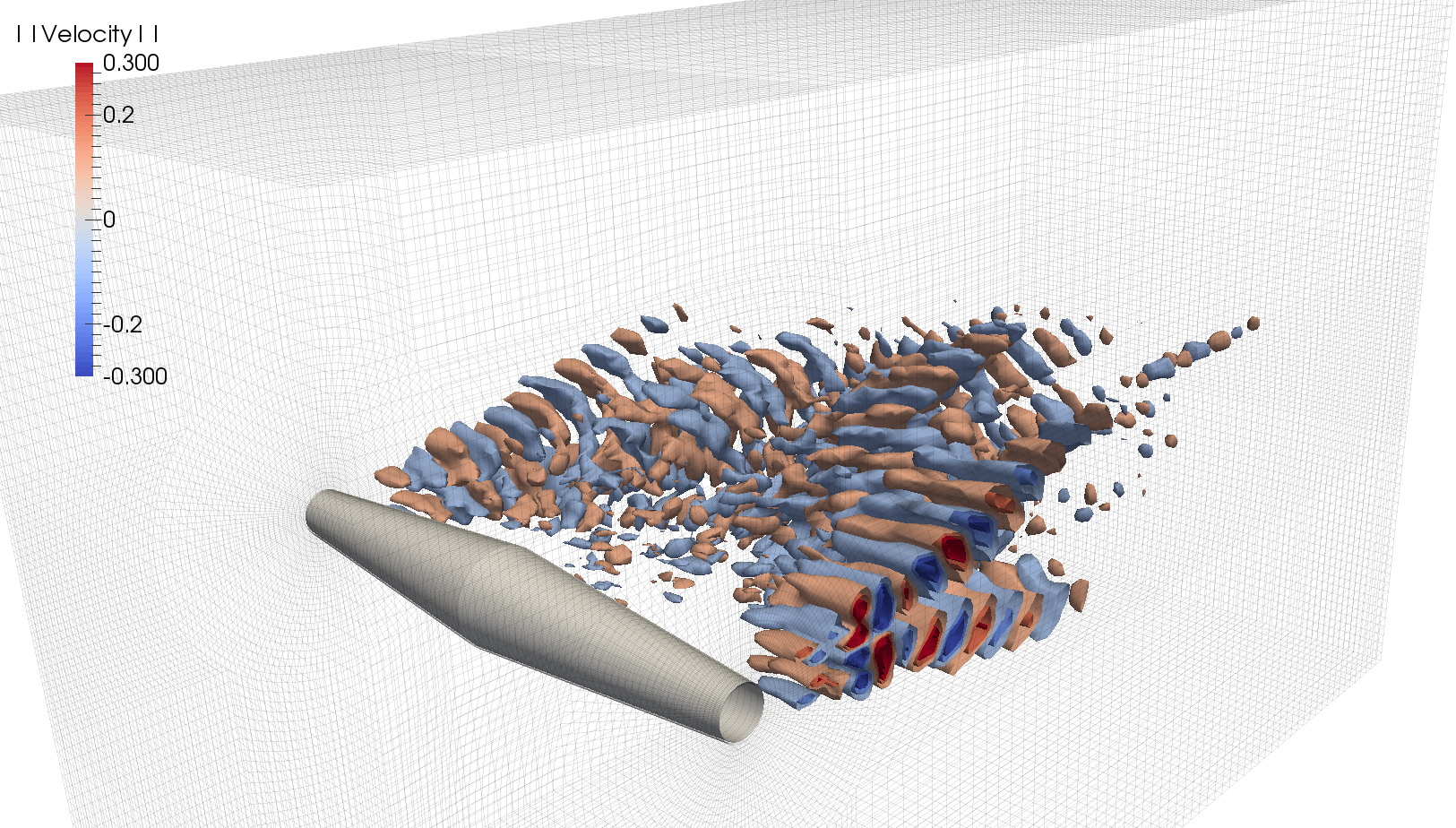}
    \caption{DMD modes for the flow around a cone corresponding to eigenvalues close to 1. The modes are visualized by iso-surfaces of the velocity magnitude.}
    \label{fig:Cylinder_DMD}
\end{figure}

Figure~\ref{fig:Cylinder} shows streamlines of the velocity field at different time steps. Similar to the two-dimensional von K\'arm\'an vortex street (Figure~\ref{fig:Karman}), vortices separate alternatingly from the lower and the upper side of the blunt body. However, due to the conical shape of the object, the flow pattern is much more complex than in the two-dimensional case. In this case, storing the full matrix $ A $ would require more than $7.5$ TB. The corresponding DMD modes, sorted by their respective frequency, are depicted in Figure~\ref{fig:Cylinder_DMD}, where we visualize the flow patterns by iso-surfaces of the velocity magnitude. In accordance with \cite{RMBSH09}, we calculate the frequency of the modes by $\omega = \Im(\log(\lambda))/\Delta t$, where $\Delta t$ is the time step between the snapshots. We observe that larger structures correspond to lower frequencies, i.e.~they indicate large but slowly rotating vortices, whereas the smaller structures possess higher frequencies. Moreover, we can see that large structures originate from the outward pointing kink in the middle of the domain and the smaller structures from the inward pointing kink at the periodic boundary in $x_2$-direction.

\section{Conclusion}
\label{sec:Conclusion}

We showed that the TT-format -- which is based on successive SVDs -- implicitly already contains information about the pseudoinverse of certain tensor unfoldings. The goal is to gain insight into the characteristic properties of tensors and to develop tensor-based algorithms for solving, for instance, systems of linear equations or eigenvalue problems that directly exploit the inherent properties of the TT-decomposition instead of reformulating the problem as an equivalent optimization problem. One application which requires the computation of a pseudoinverse of such a tensor unfolding is DMD. If the data to be analyzed is already given in TT-format, our algorithm efficiently computes the DMD modes and eigenvalues directly on the low-rank representations of the data matrices $ \mathbf{X} $ and $ \mathbf{Y} $.

Analogously, variants of DMD such as sparsity-promoting DMD or kernel-based DMD could be reformulated as well. In the same way, the algorithms presented within this paper could be used to extend EDMD to compute the Koopman modes directly in the TT-format, provided that the simulation data is generated using low-rank tensor approximations. The only difference is that we then have to compute the matrix $ A = \Psi_Y \Psi_X^+ $, where $ \Psi_X $ and $ \Psi_Y $ are nonlinear transformations of the original data matrices $ X $ and $ Y $. A tensor-based method to compute the eigenfunctions of the Koopman operator has been proposed in~\cite{KS16}. The computations rely on the construction of a generalized eigenvalue problem of the form $ \boldsymbol{\xi} \mathbf{A} = \lambda \boldsymbol{\xi} \mathbf{G} $, which is solved using simple power iteration schemes.

So far, we considered several two-dimensional problems and one three-dimensional problem to illustrate how tensor-based data-driven methods might help mitigate the curse of dimensionality and enable the analysis of more complex dynamical systems. Future work includes applying tensor-based DMD to higher-dimensional problems to analyze scalability and efficiency of the proposed methods as well as studying fluid flow applications with a larger number of degrees of freedom.

\section*{Acknowledgements}

This research has been partially funded by Deutsche For\-schungsgemeinschaft (DFG) through grant CRC 1114 as well as by the Berlin Mathematical School and the Einstein Center for Mathematics. The OpenFOAM calculations were performed on resources provided by the Paderborn Center for Parallel Computing ($PC^2$). Moreover, we would like to thank the reviewers for their helpful comments and suggestions for improvement.

\bibliographystyle{unsrt}
\bibliography{TDMD}

\end{document}